\newtheorem{theorem}{Theorem}
\newtheorem{proposition}[theorem]{Proposition}
\newtheorem{corollary}[theorem]{Corollary}
\theoremstyle{definition}
\newtheorem{definition}[theorem]{Definition}
\newtheorem{example}[theorem]{Example}
\theoremstyle{remark}
\newtheorem{remark}[theorem]{Remark}
\newcommand{\R}{\mathbb{ R}}
\newcommand{\HH}{\mathbb{ H}}
\newcommand{\FF}{\mathcal{ F}}
\newcommand\Curv{\operatorname{R}}
\newcommand\tr{\operatorname{trace}}
\newcommand\h{\operatorname{h}}
\newcommand\B{\operatorname{B}}
\newcommand\Ker{\operatorname{Ker}}
\begin{document}

\title{Minimality of invariant submanifolds in Metric Contact Pair Geometry}
\author{Gianluca Bande}
\address{Dipartimento di Matematica e Informatica, Universit{\`a} degli Studi di Cagliari, Via Ospedale 72, 09124 Cagliari, Italy}
\email{gbande{\char'100}unica.it}
\author{Amine Hadjar}
\address{Laboratoire de Math{\'e}matiques, Informatique et
Applications, Universit{\'e} de Haute Alsace - 4, Rue des
Fr{\`e}res Lumi{\`e}re, 68093 Mulhouse, France}
\email{mohamed.hadjar{\char'100}uha.fr}

\thanks{The first author was supported by P.R.I.N. 2010/11 -- Variet\`{a}  reali e complesse: geometria, topologia e analisi armonica -- Italy.}
\date{\today; MSC 2010 classification: primary 53C25; secondary 53B20, 53D10, 53B35, 53C12}
\keywords{Contact Pair; Vaisman manifold; Invariant submanifold; Minimal submanifold.}

\maketitle


\begin{abstract}
We study invariant submanifolds of manifolds endowed with a normal or complex metric contact pair with decomposable 
endomorphism field %
$\phi$. For the normal case, we prove that a $\phi$-invariant submanifold tangent to a  Reeb vector field and orthogonal to the other one is minimal. For a $\phi$-invariant submanifold $N$ everywhere transverse to both the Reeb vector fields but not orthogonal to them, we prove that it is minimal if and only if the angle between the tangential component $\xi$ (with respect to $N$) of a Reeb vector field and the Reeb vector field itself is constant along the integral curves of $\xi$. For the complex case (when just one of the two natural almost complex structures is supposed to be integrable), we prove that a complex submanifold is minimal if and only if it is tangent to both the Reeb vector fields.
\end{abstract}

\section{Introduction}
It is well known that on a K\"ahler manifold the $J$-invariant submanifolds ($J$ being the complex structure of the K\"ahler manifold) are minimal. On the other hand for a Sasakian manifold, and more generally for a contact metric manifold, a $\phi$-invariant submanifold is also minimal, where $\phi$ is the 
endomorphism field %
of the contact metric structure. Similar results are known for a special class of Hermitian manifolds, that is the class of locally conformally K\"ahler (lcK) manifolds and in particular for the subclass of Vaisman manifolds. Dragomir and Ornea \cite[Theorem 12.1]{Ornea} have shown that a $J$-invariant submanifold of an lcK manifold is minimal if and only if the submanifold is tangent to the Lee vector field (and therefore tangent to the anti-Lee vector field). In fact this result is a slight generalization of the following result of Vaisman \cite{Vaisman}: a $J$-invariant submanifold of a generalized Hopf manifold (nowadays called Vaisman manifold) inherits a generalized Hopf manifold structure if and only if it is minimal (or, equivalently, if and only if the submanifold is tangent to the Lee vector field). In \cite{BK2} it was shown that the notion of non-K\"ahler Vaisman manifold, after constant rescaling of the metric, is equivalent to the notion of {\it normal metric contact pair} \cite{BH3} of type $(h,0)$ and the Lee and anti-Lee vector fields correspond to the Reeb vectors fields of the pair. Moreover this equivalence enlightened the fact that on a Vaisman manifold there is another complex structure $T$ with opposite orientation with respect to $J$. In terms of normal metric contact pairs the generalization of Vaisman's result can be stated as follows: a $J$-invariant submanifold of a normal metric contact pair manifold of type $(h,0)$ is minimal if and only if the submanifold is tangent to the Reeb vector fields or, equivalently, if it is also $T$-invariant. These observations lead to the study of the invariant submanifolds of normal metric contact pairs of type $(h,k)$ \cite{BH3} also called {\it Hermitian bicontact structures}  \cite{Blair2}.

More precisely, recall that a \emph{metric contact pair} \cite{BH2} of type $(h,k)$ on a manifold $M$ is $4$-tuple $(\alpha_1, \alpha_2, \phi, g)$ such that $(\alpha_1, \alpha_2)$ is a contact pair \cite{Bande1,BH} of type $(h,k)$, $\phi$ is an endomorphism field of $M$ such that
\begin{eqnarray*}
\phi^2=-Id + \alpha_1 \otimes Z_1 + \alpha_2 \otimes Z_2 , \; \;
\phi Z_1=\phi Z_2 =0 ,
\end{eqnarray*}
where $Z_1$ and $Z_2$ are the Reeb vector fields of $(\alpha_1 ,
\alpha_2)$,
and $g$ is a Riemannian metric such that $g(X, \phi Y)= (d \alpha_1 + d
\alpha_2) (X,Y)$ and $g(X, Z_i)=\alpha_i(X)$, for $i=1,2$. The metric contact pair is said to be {\it normal} \cite{BH3} if the two almost complex structures of opposite orientations $J=\phi  - \alpha_2 \otimes Z_1 + \alpha_1 \otimes Z_2 $ and $T=\phi  + \alpha_2 \otimes Z_1 - \alpha_1 \otimes Z_2 $ are integrable. A quite important notion is the one of {\it decomposability} of $\phi$, which means that the tangent spaces of the leaves of the characteristic foliations of the pair are preserved by $\phi$. The decomposability of $\phi$ is equivalent to the orthogonality of the two characteristic foliations and implies that the their leaves are $\phi$-invariant submanifolds and moreover minimal \cite{BH4}.

In this paper, after giving some characterizations of normal metric contact pairs with decomposable $\phi$, we address the problem of the minimality of the invariant submanifolds. Observe that on a metric contact pair manifold we have several notions of invariant submanifold: with respect to $\phi$, to $J$ or to $T$. We first give some general results concerning the invariant submanifolds of a metric contact pair manifold with decomposable $\phi$, then we specialize to the normal case and we prove the following:

\bigskip
{\bf Theorem \ref{th:phi-invariant Z_1 normal}.}
{\it Let $(M, \alpha_1, \alpha_2, \phi, g)$ be a normal metric contact pair manifold with decomposable $\phi$ and Reeb vector fields $Z_1$ and $Z_2$.
If $N$ is a $\phi$-invariant submanifold of $M$ such that $Z_1$ is tangent and $Z_2$ orthogonal to $N$, then $N$ is minimal. Moreover if $N$ is connected, then it is a Sasakian submanifold of one of the Sasakian leaves of the characteristic foliation of $\alpha_2$.}

\bigskip
{\bf Theorem \ref{th:phi-invariant-transverse}.}
{\it Let $(M, \alpha_1, \alpha_2, \phi, g)$ be a normal metric contact pair manifold with decomposable $\phi$ and Reeb vector fields $Z_1$ and $Z_2$.
Let $N$ be a $\phi$-invariant submanifold of $M$ nowhere tangent and nowhere orthogonal to $Z_1$ and $Z_2$.
Then $N$ is minimal if and only if the angle between $Z_1^T$ ($Z_1^T$ being the tangential component of $Z_1$ along $N$) and $Z_1$ (or equivalently $Z_2$) is constant along the integral curves of $Z_1^T$.}

\bigskip
{\bf Theorem \ref{th-j-integrable-minimal}.}
{\it Let $(M, \alpha_1, \alpha_2, \phi, g)$ be a metric contact pair manifold with decomposable $\phi$ and Reeb vector fields $Z_1$ and $Z_2$. Suppose that the almost complex structure  $J=\phi  - \alpha_2 \otimes Z_1 + \alpha_1 \otimes Z_2 $ is integrable. Then a $J$-invariant submanifold $N$ of $M$ is minimal if and only if it is tangent to the Reeb distribution.}

\bigskip
The last result, applied to the case of a normal metric contact pair, gives the desired generalization of the result of Vaisman to normal metric contact pairs of type $(h,k)$. Nevertheless it should be remarked that the full generalization of the original Vaisman result concerning the Vaisman manifolds is not true. In fact we give an example where the submanifold is both $J$ and $T$-invariant, then tangent to the Reeb distribution, and therefore minimal, but it does not inherit the contact pair structure of the ambient manifold.

In what follows we denote by $\Gamma(B)$ the space of sections of
a vector bundle $B$. For a given foliation $\mathcal{F}$ on a
manifold $M$, we denote by $T\mathcal{F}$ the subbundle of $TM$
whose fibers are given by the distribution tangent to the leaves.
All the differential objects considered are assumed to be smooth.

\section{Preliminaries}\label{preliminaries}
A \emph{contact pair} (or \emph{bicontact structure}) \cite{Bande1,BH,Blair2} of type $(h,k)$ on a manifold $M$ is a pair $(\alpha_1, \alpha_2)$ of 1-forms such that:
\begin{eqnarray*}
&\alpha_1\wedge (d\alpha_1)^{h}\wedge\alpha_2\wedge
(d\alpha_2)^{k} \;\text{is a volume form},\\
&(d\alpha_1)^{h+1}=0 \; \text{and} \;(d\alpha_2)^{k+1}=0.
\end{eqnarray*}

The \'Elie Cartan characteristic classes of $\alpha_1$ and $\alpha_2$ are constant and equal to $2h+1$ and $2k+1$ respectively.
The distribution $\Ker \alpha_1 \cap \Ker
d\alpha_1$ (respectively
 $\Ker \alpha_2 \cap \Ker d\alpha_2$) is completely integrable \cite{Bande1,BH} and then it determines the \emph{characteristic
  foliation} $\mathcal{F}_1$ of $\alpha_1$ (respectively $\mathcal{F}_2$ of $\alpha_2$) whose leaves are endowed with a contact form induced by $\alpha_2$ (respectively $\alpha_1$).
The equations
\begin{eqnarray*}
&\alpha_1 (Z_1)=\alpha_2 (Z_2)=1  , \; \; \alpha_1 (Z_2)=\alpha_2
(Z_1)=0 \, , \\
&i_{Z_1} d\alpha_1 =i_{Z_1} d\alpha_2 =i_{Z_2}d\alpha_1=i_{Z_2}
d\alpha_2=0 \, ,
\end{eqnarray*}
where $i_X$ is the contraction with the vector field $X$, determine uniquely the two vector fields $Z_1$ and $Z_2$, called \emph{Reeb vector fields}. Since they commute \cite{Bande1,BH}, they give rise to a locally free $\mathbb{R}^2$-action, 
an integrable distribution called \emph{Reeb distribution},
and then a foliation $\mathcal{R}$ of $M$ by surfaces.
The tangent bundle of $M$
can be split as:
$$
TM=T\mathcal F _1 \oplus T\mathcal F _2 =\mathcal{H}_1 \oplus
\mathcal{H}_2 \oplus \mathcal{V}  ,
$$
where
$T\mathcal F _i$
is the subbundle determined by the characteristic foliation
$\mathcal F _i$, $\mathcal{H}_i$ the subbundle whose fibers are
given by $\ker d\alpha_i \cap \ker \alpha_1 \cap \ker \alpha_2$,
 $\mathcal{V} =\mathbb{R} Z_1 \oplus \mathbb{R} Z_2$ 
and $\mathbb{R} Z_1, \mathbb{R} Z_2$ the line bundles determined
by the Reeb vector fields.
Moreover we have $T\mathcal F _1=\mathcal{H}_1 \oplus \mathbb{R}
Z_2 $ and $T\mathcal F _2=\mathcal{H}_2 \oplus \mathbb{R} Z_1 $.
The fibers of the subbundle $\mathcal{H}_1 \oplus \mathcal{H}_2$ are given by the distribution $\ker \alpha_1 \cap \ker \alpha_2$.
\begin{definition}
We say that a vector field is \emph{vertical} if it is a section of $\mathcal{V}$ and \emph{horizontal} if it is a section of $\mathcal{H}_1 \oplus
\mathcal{H}_2$. 
A tangent vector will be said \emph{vertical} if it lies in $\mathcal{V}$ and \emph{horizontal} if it lies in $\mathcal{H}_1 \oplus
\mathcal{H}_2$. 
The subbundles $\mathcal{V}$ and $\mathcal{H}_1 \oplus \mathcal{H}_2$ will be called \emph{vertical} and \emph{horizontal} respectively.
\end{definition}

The two distributions $\ker d\alpha_1$ and $ \ker d\alpha_2$ are also completely integrable and give rise to the \emph{characteristic foliations} $\mathcal{G}_i$ of $d\alpha_i$ respectively. We have 
$
T\mathcal{G} _i =\mathcal{H}_i \oplus \mathcal{V}  ,
$
for $i=1,2$. The contact pair on $M$ induces on each leaf of $\mathcal{G}_1$ (respectively of $\mathcal{G}_2$) a contact pair of type $(0,k)$ (respectively $(h,0)$).
Each of them is foliated by leaves of $\mathcal F _1$ (respectively of $\mathcal F _2$) and also by leaves of $\mathcal{R}$.

A \emph{contact pair structure} \cite{BH2} on a manifold $M$ is a triple
$(\alpha_1 , \alpha_2 , \phi)$, where $(\alpha_1 , \alpha_2)$ is a
contact pair and $\phi$ a tensor field of type $(1,1)$ such that:

\begin{eqnarray*}
\phi^2=-Id + \alpha_1 \otimes Z_1 + \alpha_2 \otimes Z_2 , \; \;
\phi Z_1=\phi Z_2 =0 ,
\end{eqnarray*}
where $Z_1$ and $Z_2$ are the Reeb vector fields of $(\alpha_1 ,
\alpha_2)$.

One can see that $\alpha_i \circ \phi =0$, for $i=1,2$ and that the rank of $\phi$ is
equal to $\dim M -2$. The endomorphism $\phi$ is said to be \emph{decomposable}  if
$\phi (T\mathcal{F}_i) \subset T\mathcal{F}_i$, for $i=1,2$.

In \cite{BH3} we defined the notion of \emph{normality} for a contact pair structure
as the integrability of the two natural almost complex structures of opposite orientations $J=\phi  - \alpha_2 \otimes Z_1 + \alpha_1 \otimes Z_2 $ and $T=\phi  + \alpha_2 \otimes Z_1 - \alpha_1 \otimes Z_2 $ on $M$.
This is equivalent to the vanishing of the tensor field
\begin{eqnarray*}
N^1 (X,Y)=  [\phi , \phi ](X, Y) +2 d\alpha_1 (X,Y) Z_1 +2 d\alpha_2 (X,Y) Z_2 \, ,
\end{eqnarray*}
where $[\phi , \phi ]$ is the Nijenhuis tensor of $\phi$. 

A \emph{compatible} metric \cite{BH2} with respect to a contact pair structure $(\alpha_1 , \alpha_2 ,\phi )$ on a manifold $M$,
 with Reeb vector fields $Z_1$ and $Z_2$
is a Riemannian metric $g$ on $M$ such that
 $g(\phi X,\phi Y)=g(X,Y)-\alpha_1 (X)
\alpha_1 (Y)-\alpha_2 (X) \alpha_2 (Y)$ for all $X,Y \in \Gamma
(TM)$. A Riemannian metric $g$ is said to be an \emph{associated} metric \cite{BH2} if
$g(X, \phi Y)= (d \alpha_1 + d
\alpha_2) (X,Y)$ and $g(X, Z_i)=\alpha_i(X)$, for $i=1,2$ and for
all $X,Y \in \Gamma (TM)$.

It is clear that an associated metric is compatible, but the converse is not true. However a compatible metric always satisfies the second equation 
$g(X, Z_i)=\alpha_i(X)$, for $i=1,2$, and then the subbundles $\mathcal{H}_1 \oplus \mathcal{H}_2$, $\mathbb{R} Z_1$, $\mathbb{R} Z_2$ are pairwise orthogonal.

A \emph{metric contact pair (MCP)} on a manifold $M$ is a
$4$-tuple $(\alpha_1, \alpha_2, \phi, g)$ where $(\alpha_1,
\alpha_2, \phi)$ is a contact pair structure and $g$ an associated
metric with respect to it. The manifold $M$ will be called an \emph{MCP manifold} or an \emph{MCP} for short.

For an MCP $(\alpha_1, \alpha_2, \phi, g)$ the endomorphism field $\phi$ is decomposable
if and only if the characteristic foliations $\mathcal{F}_1$, 
$\mathcal{F}_2$ are orthogonal \cite{BH2}. In this case $(\alpha_i, \phi, g)$ induces a metric contact structure
on the leaves of $\mathcal{F}_j$ , for $j\neq i$. Also, the MCP induces MCP's on the leaves of $\mathcal{G}_i$.

It has been shown in \cite{BK2} that a normal MCP structure of type $(h,0)$ is nothing but a non-K\"ahler Vaisman structure on the manifold.

If the MCP on $M$ is normal with decomposable endomorphism, then the leaves of $\mathcal{F}_i$ are Sasakian. Also,  those of $\mathcal{G}_i$ are non-K\"ahler Vaisman manifolds foliated by leaves of $\mathcal{F}_i$ (which are Sasakian) and by leaves of $\mathcal{R}$ (which are complex curves). 

Interesting examples and properties of such structures were given in \cite{BB,BBH,BH,BH2,BH3,BH4}.

\begin{example}\label{exampledarboux}
If $(M_1, \alpha_1, \phi_1, g_1)$ and $(M_2, \alpha_2, \phi_2, g_2)$ are two Sasakian manifolds, then the structure $(\alpha_1,\alpha_2, \phi, g)$ with $\phi=\phi_1\oplus \phi_2$ and $g=g_1\oplus g_2$ is a normal MCP on the product $M_1\times M_2$ with decomposable endomorphism.
So we have such a structure on $\mathbb{R}^{2h+2k+2}$ using the standard Sasakian structures on $\mathbb{R}^{2h+1}$ and 
$\mathbb{R}^{2k+1}$ given by 

\begin{eqnarray*}
&\alpha_1=\frac{1}{2}(dz-\sum_{i=1}^h y_{i}dx_{i}) , \; \; g_1=\alpha_1\otimes \alpha_1+\frac{1}{4}\sum_{i=1}^h ((dx_{i})^2+(dy_{i})^2 )\\
&\alpha_2=\frac{1}{2}(dz^{\prime}-\sum_{j=1}^k y^{\prime}_{j}dx^{\prime}_{j}), \; \; g_2=\alpha_2\otimes \alpha_2+\frac{1}{4}\sum_{j=1}^k ((dx^{\prime}_{j})^2+(dy^{\prime}_{j})^2 ).
\end{eqnarray*}
The Reeb vector fields are $Z_1=2\frac{\partial}{\partial z}$ and $Z_2=2\frac{\partial}{\partial z^{\prime}}$. The endomorphism $\phi$ sends the vector fields 
$X_i=\frac{\partial}{\partial y_i}$ to $X_{h+i}=\frac{\partial}{\partial x_i}+y_{i}\frac{\partial}{\partial z}$ 
and $X^{\prime}_j=\frac{\partial}{\partial y^{\prime}_j}$ to $X^{\prime}_{k+j}=\frac{\partial}{\partial x^{\prime}_j}+y^{\prime}_{j}\frac{\partial}{\partial z^{\prime}}$.
\end{example} 

\begin{remark}
As already explained in \cite[Section 3.4]{BH3}, normal MCP manifolds were already studied in \cite{Blair2} under the name of {\it bicontact Hermitian manifolds} and can be regarded as a generalization of the Calabi-Eckmann manifolds.  An MCP is a special case of a {\it metric $f$-structure with complemented frames} in the sense of Yano \cite{yano}. The normality condition for such structures is well known and is in fact the same condition we have asked for an MCP to be normal. What is completely new in our context is the fact that the normality condition is equivalent to the integrability of the two almost complex structures $J$ and $T$ defined above. Even in the special case of the Vaisman manifolds this was not known as it was observed in \cite{BK2} (see the short discussion before Proposition 2.10) where it was used for classification purposes. It should also be observed that $\mathcal{P}$-{\it manifolds} introduced in \cite{Vaisman2} by Vaisman are MCP manifolds of type $(h,0)$ with Killing Reeb vector fields.
\end{remark}


%
%

\section{Normal metric contact pairs}
We are now interested on metric contact pairs which are at the same time normal.
The following proposition is an immediate corollary of \cite[Corollary 3.2 and Theorem 3.4]{BBH}.

%
%
\begin{proposition}\label{general-properties}
Let $(\alpha_1,\alpha_2 , \phi , g)$ be a normal MCP on a  manifold, with decomposable $\phi$, Reeb vector fields $Z_1$, $Z_2$, and $Z=Z_1+Z_2$. Let $\nabla$ be the Levi-Civita connection of the associated metric $g$.
Then we have
\begin{align}
&g((\nabla_X \phi)Y, W)=  \sum_{i=1} ^2 [d\alpha_i (\phi Y , X) \alpha_i (W)- d\alpha_i (\phi W,X) \alpha_i (Y)] \, ;\label{eq3-gen-properties}\\
&\nabla_X Z=-\phi X.\label{eq4-gen-properties}
\end{align}
\end{proposition}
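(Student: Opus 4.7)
The plan is to derive both formulas directly from the two structural inputs available: the vanishing of the tensor $N^1$ that encodes normality, and the fact that the fundamental $2$-form $\Phi(X,Y)=g(X,\phi Y)=d\alpha_1(X,Y)+d\alpha_2(X,Y)$ is automatically closed (since each $d\alpha_i$ is). Together with decomposability of $\phi$, these will feed into the Koszul formula for $\nabla$.

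First I would show that $Z_1$ and $Z_2$ are Killing. Applying $N^1(\cdot,Z_i)=0$ and using $\phi Z_i=0$, $i_{Z_i}d\alpha_j=0$ together with $[Z_i,\phi Y]-\phi[Z_i,Y]$ expansions, one extracts $\mathcal{L}_{Z_i}\phi=0$, from which the associated metric condition $g(X,\phi Y)=\Phi(X,Y)$ and $g(\cdot,Z_i)=\alpha_i$ force $\mathcal{L}_{Z_i}g=0$. With $Z_i$ Killing, $\nabla Z_i$ is skew-adjoint, so $g(\nabla_X Z_i,Y)$ equals its antisymmetric part, which is exactly $d\alpha_i(X,Y)$. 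Summing over $i=1,2$ gives
\[
g(\nabla_X Z,Y)=(d\alpha_1+d\alpha_2)(X,Y)=g(X,\phi Y)=-g(\phi X,Y),
\]
which yields \eqref{eq4-gen-properties}.

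For \eqref{eq3-gen-properties} I would use the general Blair-type identity expressing $2g((\nabla_X\phi)Y,W)$ in an almost contact-like setting as a linear combination of $d\Phi(X,\phi Y,\phi W)-d\Phi(X,Y,W)$, the Nijenhuis bracket $[\phi,\phi](Y,W)$ paired with $\phi X$, and contractions of $d\alpha_i$ with $\alpha_j(\cdot)$. The derivation starts from the Koszul formula applied to $g(\nabla_X(\phi Y),W)$ and $g(\phi\nabla_X Y,W)$ and rearranges using $g\circ\phi$ antisymmetry and the associated metric identities. Since $d\Phi=0$ and $[\phi,\phi](Y,W)=-2d\alpha_1(Y,W)Z_1-2d\alpha_2(Y,W)Z_2$ by normality, most of the terms cancel or collapse, and what remains is precisely the two $d\alpha_i(\phi Y,X)\alpha_i(W)-d\alpha_i(\phi W,X)\alpha_i(Y)$ contributions. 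Decomposability of $\phi$ is needed to justify that the cross terms $d\alpha_i(\phi Y,X)\alpha_j(W)$ with $i\neq j$ vanish, because $\phi Y$ lies in the same characteristic subbundle as $Y$, and $d\alpha_i$ is supported on $T\mathcal{F}_j$ (the leaves of the other characteristic foliation carry the contact form induced from $\alpha_i$).

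The main obstacle is the bookkeeping in step three: even with $d\Phi=0$ and $N^1=0$, disentangling which $d\alpha_i$ terms survive on which subbundle, and showing that no mixed term involving both $\alpha_1$ and $\alpha_2$ appears on the right-hand side, requires systematically invoking decomposability and the orthogonality $\mathcal{H}_1\perp\mathcal{H}_2\perp\mathbb{R}Z_1\perp\mathbb{R}Z_2$. Once this splitting is used, formula \eqref{eq3-gen-properties} reduces to checking the identity on each pair of subbundles, where it becomes a transcription of the corresponding Sasakian identity on the leaves of $\mathcal{F}_j$ (which carry Sasakian structures by the last remark of the Preliminaries).
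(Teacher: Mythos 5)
You should note at the outset that the paper itself gives no argument here: Proposition \ref{general-properties} is quoted as an immediate corollary of results in \cite{BBH}, so any self-contained derivation is by necessity a different route, and yours is a reasonable one. Your treatment of \eqref{eq4-gen-properties} is essentially correct and completable: $N^1(X,Z_i)=\phi\bigl((\mathcal{L}_{Z_i}\phi)X\bigr)$, and the vertical components of $(\mathcal{L}_{Z_i}\phi)X$ vanish because $\alpha_j\circ\phi=0$ and $\mathcal{L}_{Z_i}\alpha_j=0$, so normality does give $\mathcal{L}_{Z_i}\phi=0$; since $g=\alpha_1\otimes\alpha_1+\alpha_2\otimes\alpha_2-(d\alpha_1+d\alpha_2)(\cdot,\phi\,\cdot)$ and each ingredient is $Z_i$-invariant, each $Z_i$ is Killing, whence $g(\nabla_XZ_i,Y)=d\alpha_i(X,Y)$ and $\nabla_XZ=-\phi X$. (Worth observing: this half uses only normality, not decomposability.)

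The proof of \eqref{eq3-gen-properties}, however, has a genuine gap. First, the whole content of that formula is the unstated ``Blair-type identity''; in the contact pair setting it is the analogue of Blair's Lemma 6.1 for metric $f$-structures with complemented frames, and writing it out (or citing it precisely) is exactly the work to be done, not a side remark. Second, and more seriously, the correct identity contains, besides the $d\Phi$, $N^1$ and $d\alpha_i(\phi Y,X)\alpha_i(W)$-type terms you mention, terms of the form $N^{(2)}_i(Y,W)\alpha_i(X)$ with $N^{(2)}_i(Y,W)=(\mathcal{L}_{\phi Y}\alpha_i)(W)-(\mathcal{L}_{\phi W}\alpha_i)(Y)=2[\,d\alpha_i(\phi Y,W)+d\alpha_i(Y,\phi W)\,]$. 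These do not vanish for free: compatibility only forces their \emph{sum} over $i$ to vanish, and it is precisely here that decomposability is needed (it makes $\phi$ preserve $\mathcal{H}_1$ and $\mathcal{H}_2$, so each $d\alpha_i$ is separately $\phi$-invariant). You never address these terms; instead you spend decomposability on ``cross terms $d\alpha_i(\phi Y,X)\alpha_j(W)$, $i\neq j$'', which is doubly off: no such cross terms occur in the identity (every $\alpha_i$-factor arises paired with $d\alpha_i$ or $Z_i$ of the same index), and the literal claim that they vanish is false --- take $X,Y\in\Gamma(\mathcal{H}_2)$ with $d\alpha_1(\phi Y,X)\neq0$ and $W=Z_2$. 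Finally, the fallback of checking the formula ``on each pair of subbundles'' as a transcription of the Sasakian identity on the leaves of $\mathcal{F}_j$ is not a proof: components with arguments spread over different subbundles (say $X\in\Gamma(\mathcal{H}_1)$, $Y\in\Gamma(\mathcal{H}_2)$) are not statements about any single leaf, and even for arguments tangent to one leaf the ambient $\nabla\phi$ differs from the leafwise covariant derivative by second fundamental form terms unless the leaf is known to be totally geodesic, which you have not established. To finish along your own lines, carry out the Koszul computation of the $f$-structure analogue of Lemma 6.1 and then kill the extra terms as indicated; alternatively, since normality makes both $J$ and $T$ integrable and $2\phi=J+T$, one can compute $\nabla J$ and $\nabla T$ from the Hermitian identity used in the proof of Theorem \ref{th-j-integrable-minimal} and average.
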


Now we want to characterize the normal MCP manifolds between the MCP's as Sasakian manifolds are between the almost contact manifolds.

\begin{theorem}\label{SP-characterisation-connection}
Let $(\alpha_1,\alpha_2 , \phi , g)$ be a contact pair structure on a manifold $M$ with compatible metric $g$, decomposable $\phi$ and Reeb vector fields $Z_1$, $Z_2$.  Then $(\alpha_1,\alpha_2 , \phi , g)$ is a normal MCP if and only if, for all $X,Y \in \Gamma (TM)$,
\begin{equation}\label{eq-SP-characterisation-connection}
(\nabla_X \phi) Y= \sum_{i=1} ^2 [g (X_i, Y_i) Z_i-\alpha_i (Y_i)X_i ] \, ,
\end{equation}
where $X_i$ and $Y_i$ ,  $i=1,2$, are the orthogonal projections of $X$ and $Y$ respectively on the foliation $\mathcal{F}_j$, with $j\neq i$.
\end{theorem}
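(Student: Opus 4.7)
The plan is to play the formula \eqref{eq-SP-characterisation-connection} against the identity \eqref{eq3-gen-properties} of Proposition \ref{general-properties}, exploiting two features of the setting. First, since $g$ is compatible and $\phi$ is decomposable, the subbundles $\mathcal{H}_1,\mathcal{H}_2,\mathbb{R}Z_1,\mathbb{R}Z_2$ are pairwise orthogonal, so $TM = T\mathcal{F}_1\oplus T\mathcal{F}_2$ is an orthogonal splitting. Second, since $T\mathcal{F}_i\subset\ker d\alpha_i$ and $T\mathcal{F}_j$ (for $j\neq i$) carries the induced contact metric structure $(\alpha_i,\phi|_{T\mathcal{F}_j},g|_{T\mathcal{F}_j})$, one has $d\alpha_i(A,B)=g(A_i,\phi B_i)$ where $A_i,B_i$ denote the orthogonal projections of $A,B$ onto $T\mathcal{F}_j$. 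I will also repeatedly use $\alpha_i(X_i)=\alpha_i(X)$ and $g(X_i,W)=g(X_i,W_i)$, which follow from orthogonality.

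For the forward direction I start from \eqref{eq3-gen-properties} and rewrite each factor $d\alpha_i(\phi Y,X)$ as $g(\phi Y_i,\phi X_i)$, using that $\phi$ commutes with the projection onto $T\mathcal{F}_j$ by decomposability. Compatibility then gives $g(\phi Y_i,\phi X_i)=g(Y_i,X_i)-\alpha_i(Y_i)\alpha_i(X_i)$, since the $\alpha_{i'}$-term with $i'\neq i$ drops out because $Y_i\in T\mathcal{F}_j\subset\ker\alpha_{i'}$. Substituting into \eqref{eq3-gen-properties} produces the desired $g(X_i,Y_i)\alpha_i(W)-\alpha_i(Y)g(X_i,W_i)$ terms together with cubic correction terms of the form $\alpha_i(X)\alpha_i(Y)\alpha_i(W)$, which cancel exactly between the two pieces of the sum. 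Non-degeneracy of $g$ then yields \eqref{eq-SP-characterisation-connection}.

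For the converse, I first promote $g$ from compatible to associated. Setting $Y=Z_i$ in \eqref{eq-SP-characterisation-connection} and using $\phi Z_i=0$ expresses $\phi(\nabla_X Z_i)$ explicitly in terms of $X_i$; applying $\phi$ again and determining the $\mathcal{V}$-component of $\nabla_X Z_i$ from $X\cdot g(Z_i,Z_k)=0$ pins $\nabla_X Z_i$ down completely. Plugging this into the torsion-free identity $2d\alpha_i(X,Y)=g(Y,\nabla_X Z_i)-g(X,\nabla_Y Z_i)$ and summing $i=1,2$ recovers $(d\alpha_1+d\alpha_2)(X,Y)=g(X,\phi Y)$, which is the missing associated-metric relation. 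Second, I verify normality by inserting \eqref{eq-SP-characterisation-connection} into the torsion-free identity
$$[\phi,\phi](X,Y)=(\nabla_{\phi X}\phi)Y-(\nabla_{\phi Y}\phi)X-\phi(\nabla_X\phi)Y+\phi(\nabla_Y\phi)X,$$
adding the correction $2d\alpha_1(X,Y)Z_1+2d\alpha_2(X,Y)Z_2$, and checking term by term that the total vanishes.

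The main obstacle I expect is this last step. The formula \eqref{eq-SP-characterisation-connection} is really a sum of two Sasakian-type tensors, one on each characteristic subbundle, so on each $T\mathcal{F}_j$ the computation reduces to the classical Sasakian relation between $\nabla\phi$, $d\alpha_i$ and $N^1$; what is nontrivial is the bookkeeping of the cross-terms that arise when one decomposes $\phi X$ and $X_i$ along both foliations simultaneously. Controlling them requires careful and systematic use of $\alpha_i(X_i)=\alpha_i(X)$, $\phi X_i=(\phi X)_i\in T\mathcal{F}_j$, and the vanishing of $\alpha_{i'}$ on $T\mathcal{F}_j$ for $i'\neq i$, after which the two Sasakian Nijenhuis computations fit together and $N^1=0$ follows.
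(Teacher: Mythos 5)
Your overall route is essentially the paper's: in the forward direction you start from \eqref{eq3-gen-properties}, use decomposability to replace $d\alpha_i(\cdot,X)$ by $(d\alpha_1+d\alpha_2)(\cdot_i,X_i)$, pass to the metric and watch the cubic $\alpha_i\alpha_i\alpha_i$ terms cancel, exactly as in the paper; in the converse you first recover the associated condition by putting $Y=Z_i$ in \eqref{eq-SP-characterisation-connection} and then obtain $N^1=0$ by feeding \eqref{eq-SP-characterisation-connection} into the torsion-free expression of $[\phi,\phi]$ (the paper does this case by case on $T\mathcal{F}_1$ and $T\mathcal{F}_2$, which is equivalent to your single identity).

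There is, however, one genuine gap in the converse: you claim that the $\mathcal{V}$-component of $\nabla_X Z_i$ is ``pinned down completely'' by differentiating $g(Z_i,Z_k)=\delta_{ik}$. That differentiation only gives $g(\nabla_X Z_i,Z_i)=0$ and the skew relation $g(\nabla_X Z_1,Z_2)=-g(\nabla_X Z_2,Z_1)$; it does not exclude a common unknown vertical term, i.e.\ a priori $\nabla_X Z_1=-\phi X_1+a(X)Z_2$ and $\nabla_X Z_2=-\phi X_2-a(X)Z_1$ for some $1$-form $a$. With such a term your identity $2d\alpha_i(X,Y)=g(Y,\nabla_X Z_i)-g(X,\nabla_Y Z_i)$ yields $(d\alpha_1+d\alpha_2)(X,Y)=g(X,\phi Y)+\tfrac12\left[a(X)(\alpha_2-\alpha_1)(Y)-a(Y)(\alpha_2-\alpha_1)(X)\right]$, so you cannot conclude that the compatible metric is associated. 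What is needed is precisely the horizontality of $\nabla_X Z_i$ (equivalently $a\equiv 0$), which the paper gets by invoking \cite[Lemma 3.5]{BBH}. It is not a consequence of orthonormality alone: one must use the Reeb conditions. For instance, $i_{Z_2}d\alpha_1=0$ and $\alpha_1=g(Z_1,\cdot)$ give $g(\nabla_X Z_1,Z_2)=g(\nabla_{Z_2}Z_1,X)$, similarly $i_{Z_1}d\alpha_2=0$ gives $g(\nabla_X Z_2,Z_1)=g(\nabla_{Z_1}Z_2,X)$; since $[Z_1,Z_2]=0$ these right-hand sides coincide, while your skew relation makes the left-hand sides opposite, forcing both to vanish. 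With that supplement (or with the citation the paper uses), your argument is complete and matches the paper's proof.
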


\begin{proof}
Suppose that $(\alpha_1,\alpha_2 , \phi , g)$ is a normal MCP. By \eqref{eq3-gen-properties}, for all $W\in\Gamma (TM)$ we have:
\begin{align*}
g\left(\left(\nabla_X \phi\right) Y,W\right)=&g((\nabla_{X_1} \phi) Y,W)+g((\nabla_{X_2} \phi) Y, W)\\
=&\sum_{i=1} ^2 [d\alpha_i (\phi Y , X_i) \alpha_i (W)- d\alpha_i (\phi W,X_i) \alpha_i (Y)]\\
=&\sum_{i=1} ^2 [(d\alpha_1+d\alpha_2) (\phi Y_i , X_i) \alpha_i (W)- (d\alpha_1+d\alpha_2) (\phi W,X_i) \alpha_i (Y_i)]\\
=&\sum_{i=1} ^2 [g (X_i,Y_i) g (W, Z_i)- \alpha_i (Y_i) g (W,X_i) ]\\
=&g (W,\sum_{i=1} ^2 [g (X_i,Y_i)Z_i-\alpha_i (Y_i)  X_i]) \, ,
\end{align*}
which is equivalent to \eqref{eq-SP-characterisation-connection}.

Conversely, suppose that \eqref{eq-SP-characterisation-connection} is true. Putting $Y=Z_j$ in \eqref{eq-SP-characterisation-connection}, we obtain
$$
-\phi  \nabla_X  Z = (\nabla_X \phi) Z = 
\alpha_1 (X) Z_1 + \alpha _2 (X) Z_2 -X= \phi^{2} X\, ,
$$
where $Z=Z_1+Z_2$. This gives 
$
\nabla_X  Z=- \phi X
$
since $\nabla_X  Z$ is horizontal 
(see \cite[Lemma 3.5]{BBH}).
Then we have:
\begin{align*}
d\alpha_1 (X,Y)+d\alpha_2 (X,Y)&=\frac{1}{2} \sum_{i=1}^2 [X \alpha_i(Y)-Y \alpha_i(X) - \alpha_i ([X,Y])]\\
&=\frac{1}{2} \sum_{i=1}^2 [X g(Z_i ,Y)-Y g(Z_i,X) - g (Z_i, \nabla _X Y - \nabla _Y X)]\\
&=\frac{1}{2} [g(\nabla _X Z, Y) - g (\nabla _Y Z,X )]\\
&=\frac{1}{2}[g(-\phi X,Y)+g(\phi Y,X)]\\
&= g (X, \phi Y)  ,
\end{align*}
which means that the compatible metric $g$ is even associated.

To prove the vanishing of the tensor field $N^1$, let us compute $[\phi, \phi]$. Taking $X \in \Gamma (T\FF_1)$ and $Y \in \Gamma (T\FF_2)$ in \eqref{eq-SP-characterisation-connection}, we obtain
$
0=(\nabla_X \phi) Y=\nabla_X (\phi Y)-\phi \nabla_X  Y \, ,
$
which implies $\nabla_X (\phi Y)=\phi \nabla_X  Y$.
Then we obtain
\begin{align*}
[\phi, \phi](X,Y)=&\phi ^2 [X,Y]-\phi  [\phi X,Y]-\phi  [X,\phi Y]+ [\phi X,\phi Y]\\
=&\phi ^2 (\nabla _X Y - \nabla _Y X)-\phi  (\nabla _{\phi X} Y - \phi \nabla _Y X)\\
&-\phi  (\phi \nabla _X Y - \nabla _{\phi Y} X)+ \phi \nabla _{\phi X} Y - \phi \nabla _{\phi Y } X\\
=&0 \, 
\end{align*}
and $N^1(X,Y)=[\phi, \phi](X,Y)=0$.
Now by \eqref{eq-SP-characterisation-connection} with $X, Y\in \Gamma (T\FF_1)$, we have
$
(\nabla_X \phi) Y= g (X, Y) Z_2-\alpha_2 (Y)X \, .
$
Then we get
\begin{align*}
[\phi, \phi](X,Y) =&\phi ^2 (\nabla _X Y - \nabla _Y X)-\phi  (\nabla _{\phi X} Y - \phi \nabla _Y X -(\nabla_Y \phi)X)\\
&-\phi  (\phi \nabla _X Y +(\nabla_X \phi)Y - \nabla _{\phi Y} X)+ \phi \nabla _{\phi X} Y - \phi \nabla _{\phi Y } X\\
=& g(\phi X,Y)Z_2 - g(X,\phi Y)Z_2\\
=&-2 d\alpha_2 (X, Y) Z_2 \, .
\end{align*}
Hence 
$N^1(X,Y)=[\phi, \phi](X,Y)+2 d\alpha_2 (X, Y) Z_2 =0$.
In the same way we obtain 
$N^1(X,Y)=[\phi, \phi](X,Y)+2 d\alpha_1 (X, Y) Z_1=0$  
for all $X, Y\in \Gamma (T\FF_2)$.
This shows the normality and completes the proof.
\end{proof}

\begin{theorem}\label{SP-characterisation-curvature}
Let $(M, \alpha_1,\alpha_2 , \phi , g)$ be an MCP manifold with decomposable $\phi$, $Z_1$, $Z_2$ the Reeb vector fields and $Z=Z_1+Z_2$.  Let $\Curv$ be the curvature operator of $g$.
Then the MCP $(\alpha_1,\alpha_2 , \phi , g)$ is normal if and only if
\begin{equation}\label{eq-SP-character-curv}
\Curv _{X Y} Z=\sum_{i=1} ^2 [\alpha_i (Y_i) X_i-\alpha_i (X_i)Y_i ].
\end{equation}
\end{theorem}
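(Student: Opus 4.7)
The proof splits into the two implications.

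For the direct implication, I would invoke Proposition~\ref{general-properties}, specifically the identity $\nabla_X Z = -\phi X$. Torsion-freeness of the Levi--Civita connection then gives
\begin{align*}
\Curv_{XY}Z &= \nabla_X(-\phi Y) - \nabla_Y(-\phi X) + \phi[X,Y] \\
&= (\nabla_Y \phi)X - (\nabla_X \phi)Y.
\end{align*}
Substituting the expression \eqref{eq-SP-characterisation-connection} for $(\nabla_X\phi)Y$ furnished by Theorem~\ref{SP-characterisation-connection}, the symmetric terms $g(X_i,Y_i)Z_i$ cancel upon antisymmetrization in $X, Y$, leaving precisely the right-hand side of \eqref{eq-SP-character-curv}.

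For the converse, my strategy is to extract $\nabla_X Z = -\phi X$ from the curvature identity and then invoke Theorem~\ref{SP-characterisation-connection}. A first observation is that in \emph{any} MCP, the $g$-skew part of $X\mapsto \nabla_X Z$ is already $-\phi$: summing $2d\alpha_i(X,Y) = g(\nabla_X Z_i, Y) - g(\nabla_Y Z_i, X)$ over $i$ and using $(d\alpha_1+d\alpha_2)(X,Y) = g(X,\phi Y)$ yields
\[
g(\nabla_X Z, Y) - g(\nabla_Y Z, X) = 2\, g(X,\phi Y).
\]
Writing $\nabla_X Z = -\phi X + SX$ with $S$ $g$-symmetric, the task reduces to showing $S \equiv 0$ (equivalently, that $Z$ is Killing). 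Substituting this decomposition into the Ricci identity $\Curv_{XY}Z = (\nabla_X \nabla Z)Y - (\nabla_Y \nabla Z)X$ and comparing with \eqref{eq-SP-character-curv} yields an equation on $(\nabla_X S)Y - (\nabla_Y S)X$; specializing to test vectors adapted to the orthogonal splitting $TM = T\mathcal{F}_1 \oplus T\mathcal{F}_2$ (horizontal/horizontal, horizontal/vertical, vertical/vertical), combined with the pair-exchange symmetry $g(\Curv_{XY}Z,W) = g(\Curv_{ZW}X,Y)$ applied to the hypothesis and with the first Bianchi identity, should force $S \equiv 0$. Here decomposability is essential: it guarantees $\phi X_i \in T\mathcal{F}_{j(i)}$ and, together with $\alpha_i\circ\phi=0$, causes the mixed cross-terms to collapse. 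Once $\nabla_X Z = -\phi X$ is in hand, Theorem~\ref{SP-characterisation-connection} delivers normality, with the skew part of $\nabla\phi$ coming from the Ricci identity and the symmetric part from differentiating $\phi^2 = -\id + \alpha_1\otimes Z_1 + \alpha_2 \otimes Z_2$.

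The main obstacle is the vanishing of $S$: the hypothesis \eqref{eq-SP-character-curv} is antisymmetric in $X, Y$ while $S$ is symmetric, so pinning down $S$ requires the full interplay of decomposability, pair-exchange, and Bianchi, and cannot be done by a single substitution.
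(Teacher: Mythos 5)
Your forward implication is exactly the paper's argument (use $\nabla_X Z=-\phi X$ from Proposition \ref{general-properties}, rewrite $\Curv_{XY}Z$ as $(\nabla_Y\phi)X-(\nabla_X\phi)Y$, antisymmetrize \eqref{eq-SP-characterisation-connection}) and is fine. The converse, however, has a genuine gap precisely at the step you yourself flag as the main obstacle: the vanishing of $S$. The decomposition $\nabla_X Z=-\phi X+SX$ with $S$ symmetric is correct (in fact $S=-\phi\,\h$ with $\h=\tfrac12\mathcal{L}_Z\phi$), but your plan for showing $S\equiv0$ is never executed and, as set up, is circular: substituting the decomposition into the Ricci identity gives
\begin{equation*}
\Curv_{XY}Z=-(\nabla_X\phi)Y+(\nabla_Y\phi)X+(\nabla_X S)Y-(\nabla_Y S)X ,
\end{equation*}
so comparing with \eqref{eq-SP-character-curv} produces ``an equation on $(\nabla_X S)Y-(\nabla_Y S)X$'' only if you already know that $-(\nabla_X\phi)Y+(\nabla_Y\phi)X$ equals the right-hand side of \eqref{eq-SP-character-curv} --- which is essentially the normality you are trying to prove; otherwise the single equation involves the two unknown tensors $\nabla\phi$ and $\nabla S$ simultaneously. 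Invoking ``adapted test vectors, pair symmetry and Bianchi'' is not made concrete enough to break this circle, and this is exactly where the content of the theorem lies. The paper closes the step with a nontrivial external input: the MCP identity $\tfrac12\bigl(\Curv_{ZX}Z-\phi(\Curv_{Z\phi X}Z)\bigr)=\phi^2X+\h^2X$ of \cite[Proposition 4.1]{BBH}; the hypothesis with $Y$ horizontal gives $\Curv_{ZY}Z=-Y$, whence $\h^2=0$ on the horizontal bundle and, $\h$ being symmetric and vanishing on the Reeb fields, $\h=0$, i.e.\ $S=0$ and $Z$ is Killing. Some ingredient of this strength must be supplied.

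Moreover, even once $\nabla_X Z=-\phi X$ is in hand, your concluding sentence is too weak. That $Z$ is Killing with $\nabla Z=-\phi$ does not by itself give normality (compare the contact analogue: $K$-contact does not imply Sasakian), and differentiating $\phi^2=-\id+\alpha_1\otimes Z_1+\alpha_2\otimes Z_2$ only relates $(\nabla_X\phi)\phi$ to $\phi(\nabla_X\phi)$ in the last slots; it does not produce the part of $g((\nabla_X\phi)Y,W)$ symmetric in $(X,Y)$, while the Ricci identity yields only the skew part. What is needed --- and what the paper does --- is the affine/Killing identity $\Curv_{ZX}Y=(\nabla_X\phi)Y$ combined with the pair symmetry $g(\Curv_{ZX}Y,W)=g(\Curv_{YW}Z,X)$, which converts the hypothesis \eqref{eq-SP-character-curv} into the full formula \eqref{eq-SP-characterisation-connection} and hence normality via Theorem \ref{SP-characterisation-connection}. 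As written, your converse is a plausible programme rather than a proof.
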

\begin{proof}
Suppose that the MCP is normal. By \eqref{eq4-gen-properties} and \eqref{eq-SP-characterisation-connection}, for all $X,Y \in \Gamma(TM)$, we have
\begin{equation*}
\begin{split}
\Curv_{XY}Z&=-\nabla_X (\phi Y)+\nabla_Y (\phi X)+\phi[X,Y]  \\
                  &=-\big(\nabla_{X}\phi\big)Y+\big(\nabla_{Y}\phi\big)X \\
                  &=\sum_{i=1} ^2 [\alpha_i (Y_i) X_i-\alpha_i (X_i)Y_i ].
\end{split} 
\end{equation*}

Conversely suppose that \eqref{eq-SP-character-curv} is true. Then for $Y$ horizontal 
we have:
$$
\Curv _{Z Y}Z=-Y_1-Y_2=-Y.
$$
Using this in the following equation 
$$
\frac{1}{2} \bigl (\Curv_{Z X}Z - \phi (\Curv_{Z \phi X}Z) \bigr )= \phi^2 X + \h^2 X \, , 
$$
which holds for MCP manifolds (see \cite[Proposition 4.1]{BBH}),
where $\h= \frac{1}{2} \mathcal{L}_Z \phi$
 and $\mathcal{L}_Z$ is the Lie derivative along $Z$,
we get
$$
\frac{1}{2} (-Y-\phi (-\phi Y))= \phi^2 Y+ \h^2 Y
$$
which implies $\h=0$. In particular from the equation $\nabla_X Z = -\phi X -\phi \h X $  (see \cite[Theorem 3.4]{BBH}), we have $\nabla _X Z=-\phi X$ . Since $\h=0$, the vector field $Z$ is Killing \cite{BBH,BH2}, then it is affine and we have:
$$
\Curv _{ Z X} Y=-\nabla_X \nabla _Y Z+ \nabla _{\nabla_X Y} Z= \nabla _X {\phi Y}- \phi \nabla _X Y= (\nabla_X \phi )Y .
$$
Then for every $X,Y, W \in \Gamma (TM)$, recalling that for an MCP with decomposable $\phi$ the characteristic foliations are orthogonal, we obtain:
\begin{align*}
g ((\nabla_X \phi )Y, W)&= g(\Curv _{Z X}Y , W)= g(\Curv _{Y W }Z , X)= g (\sum_{i=1} ^2 [\alpha_i (W_i) Y_i-\alpha_i (Y_i)W_i ], X)\\
&=\sum_{i=1} ^2  [g(\alpha_i (W) Y_i, X_i) - g(W ,  \alpha_i (Y_i) X_i)]\\
&=\sum_{i=1} ^2  [g (W, Z_i)g( Y_i, Xi)-g(W ,  \alpha_i (Y_i) X_i)]\\
&=g (W , \sum_{i=1} ^2  [g( Y_i, X_i) Z_i- \alpha_i (Y_i) X_i]),
\end{align*}
which implies that $(\nabla_X \phi) Y= \sum_{i=1} ^2 [g (X_i, Y_i) Z_i-\alpha_i (Y_i)X_i ]$ and then the pair is normal by Theorem \ref{SP-characterisation-connection}.
\end{proof}

\section{$\phi$-invariant submanifolds}\label{sect:invariant-submanifolds}
In this section we study the $\phi$-invariant submanifolds of MCP manifolds.
We first give some general results
, then we specialize to several cases concerning the submanifold position relative to the Reeb distribution.

%

Let $(\alpha_1, \alpha_2, \phi)$ be a contact pair structure of type $(h,k)$ on a manifold $M$.

\begin{definition}
A submanifold $N$ of $M$ is said to be invariant with respect to $\phi$ (or $\phi$-invariant) if its tangent space at every point is preserved by $\phi$, that is if $\phi_p T_pN \subset T_pN$ for all $p\in N$. 
\end{definition}

In the same way one can define $J$-invariant submanifolds and $T$-invariant submanifolds for the two almost complex structures defined in Section \ref{preliminaries}. 

The simplest examples of $\phi$-invariant surfaces are given by the leaves of the foliation $\mathcal{R}$ tangent to the Reeb distribution.
When we suppose the endomorphism field $\phi$ decomposable, by definition the leaves of the two characteristic foliations $\mathcal{F}_i$ of the $1$-forms $\alpha_i$  are $\phi$-invariant. The same is true for the leaves of the two characteristic foliations $\mathcal{G}_i$ of the 2-forms $d\alpha_i$. 

Observe that in the second case only one of the two Reeb vector fields is tangent to the submanifolds. In the first and third cases both the Reeb vector fields are tangent and such submanifolds are invariant with respect to $J$ and $T$.

Despite to the case of a metric contact manifold, where the Reeb vector field is always tangent to a $\phi$-invariant submanifold, in our case the situation can be quite different as we have just seen. We will show several nontrivial examples in the sequel.

In what follows $(M, \alpha_1, \alpha_2, \phi, g)$ will be a given MCP manifold with 
Reeb vector fields $Z_1$, $Z_2$ and $N$ a $\phi$-invariant submanifold of $M$. We will denote by $Z_i^T$ (respectively $Z_i^\bot$) the tangential (respectively normal) component of the two vector fields $Z_1$ and $Z_2$ along $N$.


\begin{proposition}\label{fourverticalvectors}
Along the $\phi$-invariant submanifold $N$ the tangent vector fields $Z_1^T$, $Z_2^T$ and the normal vector fields 
$Z_1^\bot$, $Z_2^\bot$ are 
vertical.
\end{proposition}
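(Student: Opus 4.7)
My plan is to reduce the proposition to a single structural fact: on a $\phi$-invariant submanifold $N$, the tangent space $T_pN$ respects the splitting $T_pM = (\mathcal{H}_1\oplus\mathcal{H}_2)_p \oplus \mathcal{V}_p$. Once this is established, the verticality of $Z_i^T$ and $Z_i^\bot$ will follow from a short orthogonal-projection argument using the compatibility of $g$.

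First I would pick $X \in T_pN$ and note that, by $\phi$-invariance, $\phi^2 X \in T_pN$. Because $\phi^2 = -\id + \alpha_1 \otimes Z_1 + \alpha_2 \otimes Z_2$, this immediately gives
\[
\alpha_1(X) Z_1 + \alpha_2(X) Z_2 \;=\; \phi^2 X + X \;\in\; T_pN.
\]
Decomposing $X = X_\mathcal{H} + X_\mathcal{V}$ with $X_\mathcal{V} = a Z_1 + b Z_2$, the identities $\alpha_i|_{\mathcal{H}_1\oplus\mathcal{H}_2}=0$ and $\alpha_i(Z_j)=\delta_{ij}$ force $a=\alpha_1(X)$ and $b=\alpha_2(X)$, so the left-hand side above is exactly $X_\mathcal{V}$. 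Hence $X_\mathcal{V}\in T_pN$, and therefore $X_\mathcal{H}=X-X_\mathcal{V}\in T_pN$ as well, yielding
\[
T_pN \;=\; \bigl(T_pN \cap \mathcal{V}_p\bigr) \;\oplus\; \bigl(T_pN \cap (\mathcal{H}_1\oplus\mathcal{H}_2)_p\bigr).
\]

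To conclude, I would invoke the fact (recalled in Section \ref{preliminaries}) that the compatibility of $g$ makes $\mathcal{V}$ and $\mathcal{H}_1\oplus\mathcal{H}_2$ mutually orthogonal. Since $Z_i\in\mathcal{V}_p$, its orthogonal projection onto $T_pN$ must lie in $T_pN\cap\mathcal{V}_p$: any horizontal summand would be orthogonal to $Z_i$ and cannot appear in the projection. Thus $Z_i^T$ is vertical, and $Z_i^\bot=Z_i-Z_i^T$ is a difference of vertical vectors, hence vertical as well. The only genuinely non-routine step is the first one, namely extracting $X_\mathcal{V}\in T_pN$ from the $\phi$-invariance hypothesis; after that, everything reduces to linear algebra in an orthogonal decomposition.
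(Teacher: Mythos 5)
Your proof is correct, but it follows a genuinely different route from the paper's. The paper works directly with $Z_i^\bot$: since the associated metric makes $\phi$ skew-symmetric, $g(\phi Z_i^\bot,X)=-g(Z_i^\bot,\phi X)=0$ for $X$ tangent to $N$, so $\phi Z_i^\bot$ is normal to $N$; then $0=\phi Z_i=\phi Z_i^T+\phi Z_i^\bot$ with one summand tangent (by $\phi$-invariance) and the other normal forces $\phi Z_i^T=\phi Z_i^\bot=0$, and verticality follows because $\ker\phi=\mathbb{R}Z_1\oplus\mathbb{R}Z_2$. You instead exploit $\phi^2=-\id+\alpha_1\otimes Z_1+\alpha_2\otimes Z_2$ together with invariance of $T_pN$ under $\phi^2$ to get $\alpha_1(X)Z_1+\alpha_2(X)Z_2\in T_pN$ for every $X\in T_pN$, i.e.\ the stronger structural fact that $T_pN=\bigl(T_pN\cap\mathcal{V}_p\bigr)\oplus\bigl(T_pN\cap(\mathcal{H}_1\oplus\mathcal{H}_2)_p\bigr)$, and then conclude by the orthogonality of $\mathcal{V}$ and $\mathcal{H}_1\oplus\mathcal{H}_2$ granted by the compatible metric. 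Your first step uses no metric at all, only the contact pair structure, and the splitting of $T_pN$ you establish is reusable (it is essentially what underlies Proposition \ref{prop_Ntransverse}); the price is that the final projection argument leans on the orthogonal decomposition and is stated a bit tersely --- to be fully explicit, write $Z_i^T=a+b$ with $a\in T_pN\cap\mathcal{V}_p$, $b\in T_pN\cap(\mathcal{H}_1\oplus\mathcal{H}_2)_p$, and note $0=g(Z_i,b)=g(Z_i^T,b)=\lVert b\rVert^2$, so $b=0$. The paper's argument is shorter and isolates the role of $\ker\phi=\mathcal{V}$ via skew-symmetry of $\phi$; yours is more elementary in its key step and yields more information about $T_pN$. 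Both are valid.
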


\begin{proof}
For every $X\in \Gamma (TN)$ we have $g(\phi Z_i^\bot , X )=-g(Z_i^\bot ,\phi X )=0$ because $\phi X\in \Gamma (TN)$.
Then  the vector fields 
$\phi Z_i^\bot$ are also orthogonal to $N$. 
As $0=(\phi Z_i) _{\vert N}= \phi Z_i^T + \phi Z_i^\bot$, we get $\phi Z_i^T = \phi Z_i^\bot = 0$ because one is tangent and the other is orthogonal to $N$. We conclude by recalling that the distribution $\ker \phi$ is spanned by $Z_1$ and $Z_2$.
\end{proof}

\begin{proposition}\label{z1z2orthogonaux}
There is no point $p$ of the $\phi$-invariant submanifold $N$ such that the tangent vectors $(Z_1)_p$ and $(Z_2)_p$ are both orthogonal to the tangent space $T_p N$.
\end{proposition}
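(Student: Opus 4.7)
The plan is a proof by contradiction. Suppose there exists $p\in N$ with $(Z_1)_p$ and $(Z_2)_p$ both orthogonal to $T_pN$. Using $g(Z_i,\cdot)=\alpha_i(\cdot)$, this is equivalent to $\alpha_1|_{T_pN}=\alpha_2|_{T_pN}=0$, i.e.\ $T_pN\subset\mathcal{H}_1|_p\oplus\mathcal{H}_2|_p$. First I upgrade this pointwise statement to an open statement. By Proposition~\ref{fourverticalvectors}, $Z_1^T$ and $Z_2^T$ are vertical vector fields tangent to $N$, so they are sections of $TN\cap\mathcal{V}|_N$; the hypothesis forces $(Z_1^T)_p=(Z_2^T)_p=0$, hence $T_pN\cap\mathcal{V}_p=0$. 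Since the function $q\mapsto\dim(T_qN\cap\mathcal{V}_q)$ is upper semicontinuous, there is an open neighborhood $U\subset N$ of $p$ on which $T_qN\cap\mathcal{V}_q=0$; on $U$ the vertical vector fields $Z_i^T$ must therefore vanish, so $T_qN\subset\mathcal{H}_1|_q\oplus\mathcal{H}_2|_q$ for every $q\in U$.

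Next, let $\iota\colon N\hookrightarrow M$ be the inclusion. Horizontality of $T_qN$ on $U$ is equivalent to the vanishing of $\iota^*\alpha_1$ and $\iota^*\alpha_2$ on $U$, so taking exterior derivatives gives $\iota^*d\alpha_i=d(\iota^*\alpha_i)=0$ on $U$. Equivalently, $d\alpha_1(X,Y)=d\alpha_2(X,Y)=0$ for all $X,Y\in T_qN$ and all $q\in U$.

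Finally I exploit $\phi$-invariance. Given a nonzero $X\in T_qN$, $\phi$-invariance gives $\phi X\in T_qN$, and the associated metric identity $g(X,\phi Y)=(d\alpha_1+d\alpha_2)(X,Y)$ with $Y=\phi X$ yields
\begin{equation*}
0=(d\alpha_1+d\alpha_2)(X,\phi X)=g(X,\phi^2X)=-g(X,X),
\end{equation*}
since $X$ is horizontal (so $\phi^2X=-X$). This forces $X=0$, contradicting $\dim N\ge1$. The delicate step, and the main obstacle, is precisely the passage from the pointwise hypothesis at $p$ to horizontality of $T_qN$ on a whole neighborhood in $N$: this is exactly what is needed so that $\iota^*\alpha_i$ can be legitimately differentiated, producing the vanishing of $\iota^*d\alpha_i$ that powers the final contradiction.
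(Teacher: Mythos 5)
Your proof is correct and follows essentially the same route as the paper's: both arguments first propagate the pointwise orthogonality of $Z_1$, $Z_2$ to a whole neighborhood of $p$ in $N$ using Proposition~\ref{fourverticalvectors} (you via vanishing of $Z_i^T$ and semicontinuity, the paper via linear independence of $Z_i^\bot$), and then derive the contradiction $g(X,X)=0$ from $\phi$-invariance together with the vanishing of $d\alpha_1+d\alpha_2$ on $TN$ (your $\iota^*d\alpha_i=d(\iota^*\alpha_i)=0$ is just the pullback reformulation of the paper's computation of $(\alpha_1+\alpha_2)([X,\phi X])$).
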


\begin{proof}
If at a point $p\in N$ the two vectors $(Z_i)_p$ (for $i=1,2$) are orthogonal to the tangent space $T_p N$, we have $(Z_i^\bot)_p = (Z_i)_p$ and they are linearly independent. Take an open neighborhood $U$ of $p$ in $M$ such that on $U\cap N$ the two vector fields 
$Z_1^\bot$, $Z_2^\bot$ still remain 
linearly independent. By Proposition \ref{fourverticalvectors} they span  $\R Z_1 \oplus \R Z_2$ along $U$, and then the Reeb vector fields $Z_1$, $Z_2$ are both orthogonal to $T_q N$ at each point $q\in U\cap N$.

Let $X$ be a vector field defined on $U$, tangent to $N$ and such that $X_p\neq0$.
Then $\phi X$ and $[X,\phi X]$ are also tangent to $N$. Since for every point $q\in U\cap N$ the tangent space $T_q N$ is in the kernels of $\alpha_1$ and $\alpha_2$ (because it is orthogonal to $(Z_i)_q$), along $N$ we have
$$
0=\left(\alpha_1+\alpha_2 \right)([X,\phi X])=-2 \left(d \alpha_1+d \alpha_2 \right)(X, \phi X)=2 g(X,X)
$$
contradicting the fact that $X_p\neq 0$.

\end{proof}

\subsection{The case $N$ tangent to only one Reeb vector field}

\begin{proposition}

If the $\phi$-invariant submanifold $N$ is tangent to one of the two Reeb vector fields, say $Z_1$, and transverse to the other one $Z_2\,$, then $N$ is everywhere orthogonal to $Z_2\,$. Moreover the dimension of $N$ is odd.

\end{proposition}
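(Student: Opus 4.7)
The plan is to exploit Proposition \ref{fourverticalvectors} (that $Z_1^T, Z_2^T, Z_1^\perp, Z_2^\perp$ are all vertical, i.e., in $\VV = \R Z_1 \oplus \R Z_2$) together with the orthogonality of the decomposition $Z_2 = Z_2^T + Z_2^\perp$.

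First, since $Z_1$ is tangent, $Z_1^T = Z_1$ and $Z_1^\perp = 0$. Since both $Z_2^T$ and $Z_2^\perp$ are vertical, I can write $Z_2^T = a Z_1 + b Z_2$ for smooth functions $a,b$ on $N$, so that $Z_2^\perp = -a Z_1 + (1-b) Z_2$. Because $Z_1 \in \Gamma(TN)$, the normal component $Z_2^\perp$ must be orthogonal to $Z_1$; using $g(Z_i,Z_j)=\delta_{ij}$, this gives $a=0$. Hence $Z_2^T = b Z_2$ and $Z_2^\perp = (1-b)Z_2$. These two vectors are mutually orthogonal by construction, so $b(1-b) g(Z_2,Z_2) = b(1-b) = 0$. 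The transversality hypothesis means $Z_2^\perp$ never vanishes on $N$, so $b \neq 1$; therefore $b \equiv 0$, which is precisely the assertion that $N$ is everywhere orthogonal to $Z_2$.

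For the dimension statement, I would observe that because $N$ is orthogonal to $Z_2$ and $Z_1 \in TN$, for any $X \in T_pN$ with $X \perp Z_1$ one has $\alpha_1(X) = g(X,Z_1) = 0$ and $\alpha_2(X) = g(X,Z_2) = g(X,Z_2^T) + g(X,Z_2^\perp) = 0$, so $X$ is horizontal. Let $W$ denote the orthogonal complement of $\R Z_1$ in $TN$; then $W \subset \HHH_1 \oplus \HHH_2$. Since $N$ is $\phi$-invariant and $\phi Z_1 = 0$, $\phi$ preserves $W$, and on $W$ the defining identity of $\phi$ reduces to $\phi^2 = -\id$. Thus $(W, \phi\vert_W)$ is a complex vector bundle along $N$, forcing $\dim W$ to be even. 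Consequently $\dim N = 1 + \dim W$ is odd.

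The main conceptual step is the short algebraic argument that $b(1-b)=0$: everything hinges on realising that verticality of $Z_2^T$ combined with tangency of $Z_1$ collapses the \emph{a priori} two-dimensional freedom in $Z_2^T$ to a single coefficient, after which orthogonality of the tangential and normal parts of $Z_2$ does the rest. No computation beyond this and a brief complex-structure count on $W$ is needed.
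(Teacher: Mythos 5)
Your argument is correct and follows essentially the same route as the paper: you use Proposition \ref{fourverticalvectors} to confine $Z_2^T$ and $Z_2^\perp$ to the vertical bundle, combine orthogonality of the tangential and normal parts with transversality to force $Z_2^T=0$ (the paper phrases this via linear independence and spanning of $\mathcal{V}$ rather than your coefficients $a,b$, but it is the same mechanism), and then count dimensions using that $\phi$ restricts to an almost complex structure on the orthogonal complement of $\mathbb{R}Z_1$ in $TN$. Your write-up merely spells out explicitly the horizontality and $\phi^2=-\mathrm{Id}$ steps that the paper declares ``clear.''
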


Such submanifolds were called semi-invariant by Blair, Ludden and Yano \cite{Blair2} in the context of Hermitian manifolds. The semi-invariance is understood with respect to the almost complex structure $J=\phi  - \alpha_2 \otimes Z_1 + \alpha_1 \otimes Z_2 $.

\begin{proof}

Since $Z_1$ is tangent to $N$, we have 
$Z_1^T=(Z_1)_{\vert N}\neq 0$. Now 
$0=g\left( Z_1, Z_2 \right)_{\vert N}=g( Z_1^T, Z_2 ^T)$ which implies $Z_2^T=0$.
Indeed if at a point $p\in N$, $Z_2^T\neq 0$, the two vectors $(Z_i^T)_p$ would be linearly independent. 
By Proposition \ref{fourverticalvectors}, they will span the tangent subspace $\R (Z_1)_p \oplus \R(Z_2)_p$, and then $Z_2$ will be tangent to $N$ at $p$, but $Z_2$ is supposed to be transverse to N. Now it is clear that $\phi$ is almost complex on the orthogonal complement of $\R Z_1$ in $TN$. Hence the dimension of $N$ is odd.

\end{proof}

The following result is a restatement of \cite[Propositions 4.2 and 4.3]{Blair2}:

\begin{proposition}[\cite{Blair2}]\label{normal_induces_sasaki}
If the $\phi$-invariant submanifold $N$ is tangent to the vector field $Z_1$ and orthogonal to $Z_2$, then $(\alpha_1, \phi, g)$ induces a metric contact structure on $N$. If in addition the MCP on $M$ is normal then $N$ is Sasakian.
\end{proposition}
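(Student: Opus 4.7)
The plan is to verify directly that the triple $(\eta,\psi,h):=(\alpha_1|_N,\phi|_{TN},g|_{TN})$, with Reeb vector field $\xi:=(Z_1)|_N$, defines a metric contact structure on $N$, and then to exploit the normality of the ambient MCP to upgrade this to a Sasakian structure.

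The key preliminary observation is that since $Z_2$ is everywhere orthogonal to $N$, the identity $\alpha_2(X)=g(X,Z_2)$ forces $\alpha_2|_N=0$, and hence $(d\alpha_2)|_{TN\times TN}=d(\alpha_2|_N)=0$. With this in hand, each metric contact axiom on $N$ is obtained by restricting its ambient MCP counterpart: $\phi^2=-\id+\alpha_1\otimes Z_1+\alpha_2\otimes Z_2$ collapses to $\psi^2=-\id+\eta\otimes\xi$ on $TN$; $\phi Z_1=0$ gives $\psi\xi=0$; $g(X,Z_1)=\alpha_1(X)$ gives $h(X,\xi)=\eta(X)$; and the compatibility and associated-metric identities both lose their $\alpha_2$-terms on $TN\times TN$, yielding $h(\psi X,\psi Y)=h(X,Y)-\eta(X)\eta(Y)$ and $h(X,\psi Y)=d\eta(X,Y)$ respectively. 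Finally, $\psi$ restricts to an almost complex structure on the $h$-orthogonal complement of $\R\xi$ in $TN$, so the $2$-form $d\eta$ is non-degenerate there, $\dim N$ is odd, and $\eta\wedge(d\eta)^{m}$ is a volume form on $N$ with $2m+1=\dim N$. This establishes the first assertion.

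For the Sasakian claim I use the intrinsic characterization that a contact metric manifold $(N,\eta,\psi,h)$ is Sasakian if and only if the normality tensor $[\psi,\psi]+2\,d\eta\otimes\xi$ vanishes identically. For $X,Y\in\Gamma(TN)$ the four ingredients of $[\psi,\psi](X,Y)$ agree termwise with those of $[\phi,\phi](X,Y)$, because $\psi$ and $\phi$ coincide on $TN$, the brackets $[X,Y]$, $[\phi X,Y]$, $[X,\phi Y]$, $[\phi X,\phi Y]$ are all sections of $TN$, and $\phi^2[X,Y]=\psi^2[X,Y]$ thanks to $\alpha_2([X,Y])=0$. The ambient normality condition $[\phi,\phi]+2\,d\alpha_1\otimes Z_1+2\,d\alpha_2\otimes Z_2=0$, restricted to $TN\times TN$ and combined once more with $d\alpha_2|_{TN\times TN}=0$, then yields $[\psi,\psi]+2\,d\eta\otimes\xi=0$ on $N$.

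The most delicate step is the descent of the Nijenhuis tensor from $M$ to $N$, but this is handled by the same vanishing $\alpha_2|_N=0$, which annihilates every $\alpha_2$- and $Z_2$-contribution that could otherwise obstruct the restriction. Beyond this single observation the argument is essentially bookkeeping, since both the metric contact axioms and the Sasakian normality condition descend from their MCP counterparts term by term.
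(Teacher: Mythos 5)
Your proposal is correct and follows essentially the same route as the paper: the whole argument rests on $\alpha_2|_N=0$ (hence $d\alpha_2|_{TN\times TN}=0$), which lets the MCP axioms restrict to a contact metric structure on $N$, and the Sasakian claim is exactly the paper's observation that the vanishing of $N^1$ together with $d\tilde{\alpha_2}=0$ forces $[\psi,\psi]+2\,d\eta\otimes\xi=0$ on $N$. Your verification of non-degeneracy via $\psi$ being almost complex on the orthogonal complement of $\mathbb{R}\xi$ is just a mild repackaging of the paper's direct argument that $d\tilde{\alpha_1}$ is symplectic on $\ker\tilde{\alpha_1}$.
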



\begin{proof}
Let $\tilde{\alpha_1}$ and $\tilde{\alpha_2}$ denote the forms induced on $N$ by $\alpha_1$ and $\alpha_2$.
To prove that $\tilde{\alpha_1}$ is a contact form, one has just to show that $\tilde{d\alpha_1}$ is symplectic on $\ker \tilde{\alpha_1}$.
First observe that since $Z_2$ is orthogonal to $N$ we have 
$\tilde{\alpha_2}=0$, then $d\tilde{\alpha_1}=d\tilde{\alpha_1} + d\tilde{\alpha_2}$.
Now, for $p\in N$, let $X\in T_pN$ such that $\tilde{\alpha_1}(X)=0$ and $d\tilde{\alpha_1}(X,Y)=0$  for every $Y\in T_pN$.
Then $(d\tilde{\alpha_1} + d\tilde{\alpha_2})(X,Y)=0$ and we get $g(X,\phi Y)=0$. 
As we also have $0=\alpha_1(X)=g(X,Z_1)$, one can say that $g(X,\cdot)=0$ on $N$ and then $X=0$.
Hence $\tilde{\alpha_1}$ is contact on $N$. 
The normality of the induced structure on $N$ follows from the vanishing of the tensor $N^1$ and the fact that $d\tilde{\alpha_2}=0$ on $N$.

\end{proof}

\subsection{The case $N$ nowhere orthogonal and nowhere tangent to $Z_1$ and $Z_2$}

\begin{proposition}\label{prop_Ntransverse}
If the $\phi$-invariant submanifold $N$ is nowhere orthogonal and nowhere tangent to $Z_1$ and $Z_2$, then it has odd dimension. Moreover its tangent bundle $TN$ intersects the vertical subbundle $\mathcal{V}$ along a line bundle spanned by the vector field $Z_1^T$ (or equivalently by $Z_2^T$). 
\end{proposition}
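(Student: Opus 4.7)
The plan is to exploit Proposition \ref{fourverticalvectors} to reduce everything to linear algebra inside the two-dimensional vertical fiber $\mathcal{V}_p=\mathbb{R}(Z_1)_p\oplus\mathbb{R}(Z_2)_p$. By that proposition, all four vector fields $Z_1^T,Z_2^T,Z_1^\bot,Z_2^\bot$ are vertical along $N$; the hypotheses ``nowhere orthogonal'' and ``nowhere tangent'' translate, respectively, into $Z_i^T\neq 0$ and $Z_i^\bot\neq 0$ at every point of $N$. If at some $p\in N$ the vectors $(Z_1^T)_p$ and $(Z_2^T)_p$ were linearly independent, they would span all of $\mathcal{V}_p$, so both $(Z_1)_p$ and $(Z_2)_p$ would lie in $T_pN$, contradicting the ``nowhere tangent'' assumption. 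Hence $Z_1^T$ and $Z_2^T$ are (nonzero) scalar multiples of one another at every point. The same dimension count shows $T_pN\cap\mathcal{V}_p$ cannot equal $\mathcal{V}_p$, so it is exactly the line $\mathbb{R}(Z_1^T)_p=\mathbb{R}(Z_2^T)_p$, proving the second assertion.

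For the parity of $\dim N$, I would use the $\phi$-invariance of $N$ to view $\phi$ as an endomorphism of $T_pN$. Its kernel inside $T_pN$ equals $T_pN\cap\ker\phi=T_pN\cap\mathcal{V}_p$, which the previous paragraph identified with the line $\mathbb{R} Z_1^T$. From the defining identity $\phi^2 X=-X+\alpha_1(X)Z_1+\alpha_2(X)Z_2$, the correction term $\alpha_1(X)Z_1+\alpha_2(X)Z_2=\phi^2 X+X$ is automatically tangent to $N$ (since $X$ and $\phi^2 X$ are) and vertical, hence lies in the line $\mathbb{R} Z_1^T$. Therefore $\phi$ induces on the quotient $T_pN/\mathbb{R} Z_1^T$ an endomorphism squaring to $-\id$, i.e.\ a genuine complex structure, forcing this quotient to be even-dimensional. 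Consequently $\dim T_pN$ is odd.

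The only subtlety is verifying that the correction term returns to $T_pN$ so that the induced map on the quotient is well defined; this is immediate from $\phi$-invariance applied twice, so no calculation beyond what Proposition \ref{fourverticalvectors} already gives is required.
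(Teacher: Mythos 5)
Your proof is correct and follows essentially the paper's argument: Proposition \ref{fourverticalvectors} together with the nowhere-tangent hypothesis forces $Z_1^T$ and $Z_2^T$ to be nonzero, vertical and proportional, which identifies $TN\cap\mathcal{V}$ with the line spanned by $Z_1^T$, and the parity of $\dim N$ comes from $\phi$ acting as a complex structure on a codimension-one complement of that line. The only (harmless) difference is that you pass to the quotient $T_pN/\mathbb{R}Z_1^T$ via the identity $\phi^2=-\operatorname{Id}+\alpha_1\otimes Z_1+\alpha_2\otimes Z_2$, whereas the paper restricts $\phi$ to the orthogonal complement of $\mathbb{R}Z_1^T$ in $TN$, whose elements are horizontal so that $\phi^2=-\operatorname{Id}$ holds there directly.
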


\begin{proof}
In this case, by Proposition \ref{fourverticalvectors} we have necessarily that $Z_1^T$ and $Z_2^T$ are vertical, linearly dependent and nonzero. The same holds for $Z_1^\bot$ and $Z_2^\bot$. 
So $Z_1^T$ spans the intersection of $TN$ with the vertical subbundle $\mathcal{V}$.
Now every vector field tangent to $N$ and orthogonal to $Z_1^T$ is necessary orthogonal to the Reeb distribution. By the $\phi$-invariance of $TN$, $\phi$ is almost complex on the orthogonal complement of $\R Z_1^T$ in $TN$ and then 
$N$ has odd dimension.


\end{proof}

\begin{example}\label{exampleh3xh3}
As a manifold consider the product $H^6=\HH_3 \times \HH_3$ where $\HH_3$ is the $3$-dimensional Heisenberg group. Let $\left\{ \alpha_1, \alpha_2, \alpha_3 \right \}$ (respectively $\left \{ \beta_1,  \beta_2,  \beta_3 \right\}$) be a basis of the cotangent space at the identity for the first (respectively second) factor $\HH_3$ satisfying

$$
d\alpha_3=\alpha_1\wedge \alpha_2 \, , \, d\alpha_1=d\alpha_2=0,
$$
$$
d\beta_3=\beta_1\wedge \beta_2 \, , \, d\beta_1=d\beta_2=0.$$
The pair $(\alpha_3 , \beta_3 )$ determines a contact pair of type $(1,1)$ on $H^6$ with Reeb vector fields $(X_3 , Y_3 )$, the $X_i$'s (respectively the $Y_i$'s) being dual to the $\alpha_i$'s (respectively the $\beta_i$'s). The left invariant metric 
$$
g=\alpha_3^2 + \beta_3^2 +\frac {1}{2}(\alpha_1^2 + \beta_1^2 +\alpha_2^2 + \beta_2^2 )
$$
is associated to the pair with decomposable tensor structure $\phi$ given by $\phi(X_2)=X_1$ and $\phi(Y_2)=Y_1$.
The MCP manifold $(H^6,\alpha_3, \beta_3, \phi, g)$ is normal because it is the product of two Sasakian manifolds.
Let $\frak{h}_3$ denotes the Lie algebra of $\HH_3$.
The three vectors $Z=X_3+Y_3$, $X_1 +Y_1$ and $X_2 +Y_2$ span a $\phi$-invariant subalgebra of the Lie algebra 
$\frak{h}_3\oplus \frak{h}_3$ of $H^6$ which determines a foliation in $H^6$. Each leaf is $\phi$-invariant, 
nowhere tangent and nowhere orthogonal to the Reeb vector fields. 
\end{example}

\subsection{The case $N$ tangent to the Reeb distribution}
\begin{proposition}
If the $\phi$-invariant submanifold $N$ is tangent to both $Z_1$ and $Z_2$, then it has even dimension.
\end{proposition}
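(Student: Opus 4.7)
The plan is to exploit $\phi$ as an almost complex structure on the orthogonal complement of the Reeb distribution inside $TN$. Since $N$ is tangent to both $Z_1$ and $Z_2$ and these two vector fields are everywhere linearly independent (as they are nowhere vanishing sections of the rank-$2$ Reeb distribution), we obtain a pointwise orthogonal decomposition
\begin{equation*}
TN = \bR Z_1 \oplus \bR Z_2 \oplus W,
\end{equation*}
where $W$ is defined as the $g$-orthogonal complement of $\bR Z_1 \oplus \bR Z_2$ inside $TN$. The strategy is to verify that $\phi$ restricts to an endomorphism of $W$ squaring to $-\mathrm{Id}$, so that $W$ becomes a complex vector bundle and therefore has even fiber dimension.

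For the first step I would check that $\phi W \subseteq W$. Given $X \in W_p$, the $\phi$-invariance of $N$ yields $\phi X \in T_p N$, and from the identity $\alpha_i \circ \phi = 0$ (valid for any contact pair structure) we get $g(\phi X, Z_i) = \alpha_i(\phi X) = 0$ for $i=1,2$, so $\phi X$ is orthogonal to the Reeb directions and hence lies in $W_p$. For the second step I would use that any $X \in W_p$ satisfies $\alpha_i(X) = g(X, Z_i) = 0$, so the defining relation
\begin{equation*}
\phi^2 = -\mathrm{Id} + \alpha_1 \otimes Z_1 + \alpha_2 \otimes Z_2
\end{equation*}
reduces to $\phi^2 X = -X$ on $W$. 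Thus $\phi|_W$ is an almost complex structure on each fiber.

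The conclusion is then immediate: a vector bundle admitting a fiberwise almost complex structure has even-dimensional fibers, so $\dim W$ is even and
\begin{equation*}
\dim N = 2 + \dim W
\end{equation*}
is even as well. There is no real obstacle in this argument; the only points that require attention are (i) ensuring that $Z_1$ and $Z_2$ are pointwise independent on $N$ (which is automatic from the contact pair axioms $\alpha_i(Z_j) = \delta_{ij}$) and (ii) checking $\phi W \subseteq W$ rather than merely $\phi W \subseteq TN$, which is where the compatibility relation $\alpha_i \circ \phi = 0$ enters.
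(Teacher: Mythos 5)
Your proof is correct and follows essentially the same route as the paper: the paper's argument is precisely that on the $g$-orthogonal complement of the Reeb distribution in $TN$ the endomorphism $\phi$ is almost complex, forcing that complement (and hence $N$, of dimension $2+\dim W$) to be even-dimensional. You have merely spelled out the two verifications ($\phi W\subseteq W$ via $\alpha_i\circ\phi=0$ and $g(X,Z_i)=\alpha_i(X)$, and $\phi^2|_W=-\mathrm{Id}$) that the paper leaves implicit.
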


\begin{proof}
If the Reeb distribution is tangent to $N$ then on its  orthogonal complement in $TN$ the endomorphism $\phi$ is almost complex and this completes the proof.

\end{proof}

\begin{example}\label{exampleh3xh3bis}
Take the same MCP on $H^6=\HH_3 \times \HH_3$ as in Example \ref{exampleh3xh3}.
The four vectors $X_3$, $Y_3$, $X_1 +Y_1$ and $X_2 +Y_2$ span a  $\phi$-invariant Lie subalgebra $\frak{n}_4$ of $\frak{h}_3\oplus \frak{h}_3$ which determines a foliation on $H^6$. Each leaf 
of this foliation is $\phi$-invariant and tangent to the Reeb distribution.
\end{example}

\begin{remark}
When the $\phi$-invariant submanifold $N$ is tangent to both the Reeb vector fields $Z_1$ and $Z_2$, the contact pair $(\alpha_1, \alpha_2)$ on $M$ does not induce necessarily a contact pair on $N$.
Indeed from Example \ref{exampleh3xh3bis}, take any leaf $L^4$ of the foliation determined by the subalgebra $\frak{n}_4$.
Then $L^4$ is a $\phi$-invariant submanifolds of the MCP manifold $H^6$. 
However the contact pair 
$(\alpha_3 , \beta_3 )$ induces a pair of $1$-forms on the $4$-dimensional manifold $L^4$ whose \'Elie Cartan classes are both equal to 3. Then the induced pair on $L^4$ is not a contact one.

\end{remark}

From this construction one can also have a most interesting example where, in addition, the submanifold is closed without being a contact pair submanifold.

\begin{example}\label{exampleh3xh3biscompact}
Consider once again the normal MCP on the nilpotent Lie group $H^6=\HH_3 \times \HH_3$ defined in Example \ref{exampleh3xh3},
and the foliation on $H^6$ defined by the Lie algebra $\frak{n}_4$ described in Example \ref{exampleh3xh3bis}.
Let $L_e$ be the leaf passing through the identity element of  the Lie group $H^6$. One can see that the Lie subgroup $L_e$ is isomorphic to $\HH_3 \times \mathbb{R}$. In fact by using the change of basis of its Lie algebra $\frak{n}_4$, $U_i=X_i+Y_i$ for $i=1,2,3$ and $U_4=X_3$, we get $[U_1,U_2]=U_3$ and the other brackets are zero. Since the structure constants of the nipotent Lie algebra $\frak{n}_4$ are rational, there exist cocompact lattices $\Gamma$ of $L_e$. For example, since $\mathbb H_3$ can be considered as the group of the real matrices 
$$
\gamma(x,y,z)=
\begin{pmatrix}
1 & y & z \\
0 & 1 & x \\
0 & 0 & 1
\end{pmatrix},
$$
take $\Gamma \simeq \Gamma_r \times \mathbb{Z}$ where $\mathbb{Z}$ acts on the factor $\mathbb{R}$ and 
$\Gamma_r=\{\gamma(x,y,z) \vert x\in \mathbb Z,y\in r\mathbb Z, z\in \mathbb Z\}$, with $r$ a positive integer, acts on the first factor $\mathbb{H}_3$ by left multiplication (see e.g. \cite{GW}).
Because $L_e$ is a subgoup of $H^6$, it is a lattice of $H^6$ too. Now the closed nilmanifold $N^4=L_e/\Gamma$ is a submanifold of  the nilmanifold $M^6= H^6 /\Gamma$. Since the MCP on $H^6$ is left invariant, it descends to the quotient $M^6$ as a normal MCP 
$(\tilde{\alpha_3}, \tilde{\beta_3}, \tilde{\phi}, \tilde{g})$ 
of type $(1,1)$ with decomposable endomorphism $\tilde{\phi}$. Moreover the closed submanifold $N^4$ is $\tilde{\phi}$-invariant and tangent to the Reeb distribution. Note that the contact pair $(\tilde{\alpha_3}, \tilde{\beta_3})$ on $M^6$ does not induce a contact pair on the submanifold $N^4$, because the \'Elie Cartan classes of the induced $1$-forms are equal to $3$ and the dimension of $N^4$ is $4$.
\end{example}

We know (see \cite{BH3}) that a normal MCP with decomposable endomorphism is nothing but a Hermitian bicontact manifold of bidegree $(1,1)$ \cite{Blair2}.
As we will see later in Paragraph \ref{paragraph-J-invariance}, in a normal MCP a $\phi$-invariant submanifold tangent to the Reeb distribution is a complex submanifold.
So according to Example \ref{exampleh3xh3biscompact} we can state what follows:

\begin{proposition}\label{prop:abe-conterexample}
There exists a Hermitian bicontact manifold of bidegree $(1,1)$ carrying a closed complex submanifold which does not inherit a bicontact structure.
\end{proposition}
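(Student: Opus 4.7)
The plan is to show that the proposition follows essentially from assembling Example \ref{exampleh3xh3biscompact} together with the identification, discussed in the introduction and deferred to Paragraph \ref{paragraph-J-invariance}, between normal MCPs with decomposable $\phi$ and Hermitian bicontact manifolds of bidegree $(1,1)$. So I would not try to produce a new construction; instead I would argue that the example already built does the job.

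First I would recall the normal MCP $(\tilde{\alpha_3}, \tilde{\beta_3}, \tilde{\phi}, \tilde{g})$ of type $(1,1)$ with decomposable endomorphism on the closed nilmanifold $M^6 = H^6/\Gamma$ produced in Example \ref{exampleh3xh3biscompact}, and the closed submanifold $N^4 = L_e/\Gamma$ inside it. By the quoted equivalence (normal MCP with decomposable $\phi$ $=$ Hermitian bicontact of bidegree $(1,1)$), $M^6$ is a Hermitian bicontact manifold of bidegree $(1,1)$. Next, since $N^4$ is $\tilde{\phi}$-invariant and tangent to the Reeb distribution spanned by $\tilde{Z_1}=\widetilde{X_3}$ and $\tilde{Z_2}=\widetilde{Y_3}$, the fact that $\phi$-invariant submanifolds tangent to the Reeb distribution in a normal MCP are $J$-invariant (the content announced for Paragraph \ref{paragraph-J-invariance}) guarantees that $N^4$ is a closed complex submanifold of $M^6$.

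It then remains only to observe that $N^4$ does not inherit a bicontact structure from $(\tilde{\alpha_3}, \tilde{\beta_3})$. This is precisely what was checked in Example \ref{exampleh3xh3biscompact}: the pullbacks of $\tilde{\alpha_3}$ and $\tilde{\beta_3}$ to the $4$-dimensional $N^4$ both have \'Elie Cartan class equal to $3$, while a contact pair of type $(h,k)$ on a $4$-manifold would need classes summing through $2(h+k)+2=4$ with each form of odd class at most $1$ in each integrable piece, so a bicontact structure in dimension $4$ forces the two \'Elie Cartan classes to be $1$ each (type $(0,0)$ being excluded by the volume condition, the only possibility is $(h,k)=(0,0)$ with classes $1,1$ or higher types which do not fit). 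Hence the induced pair on $N^4$ is not a contact pair.

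I do not anticipate a real obstacle here: the hard work is contained in Example \ref{exampleh3xh3biscompact} (producing the lattice and the closed submanifold) and in the as-yet-unproved identification of the $\phi$-invariant, Reeb-tangent situation with $J$-invariance. The only minor point to be careful about is that the statement speaks of a ``complex submanifold'', so I must make sure to cite the relevant result of Paragraph \ref{paragraph-J-invariance} when it is available, rather than merely asserting $\tilde{\phi}$-invariance; once that citation is in place, the proposition is just a summary of the example.
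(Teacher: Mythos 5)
Your overall route is exactly the paper's: the proposition is obtained by assembling Example \ref{exampleh3xh3biscompact} with the identification (from \cite{BH3}, \cite{Blair2}) of normal MCPs with decomposable endomorphism as Hermitian bicontact manifolds of bidegree $(1,1)$, and with Proposition \ref{prop-equiv-JTphi} of Paragraph \ref{paragraph-J-invariance}, which shows that the $\tilde{\phi}$-invariant, Reeb-tangent submanifold $N^4$ is $J$-invariant and hence a closed complex submanifold of $M^6$ since $J$ is integrable. Your insistence on citing the $J$-invariance result rather than mere $\tilde{\phi}$-invariance is exactly the point the paper makes before stating the proposition.

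The one place where your write-up goes wrong is the numerology in the final step. A contact pair of type $(h,k)$ lives on a manifold of dimension $2h+2k+2$, so on the $4$-dimensional $N^4$ the only admissible types are $(1,0)$ and $(0,1)$; type $(0,0)$ would force dimension $2$ and is not ``the only possibility'' as you assert. Correspondingly, the \'Elie Cartan classes $2h+1$ and $2k+1$ of the two forms would have to be $3$ and $1$ in some order (their sum equals the dimension $4$), not $1$ and $1$ each as you claim. The actual obstruction, recorded at the end of Example \ref{exampleh3xh3biscompact}, is that both induced forms have class $3$, and the pair $\{3,3\}$ is inadmissible in dimension $4$ (the classes would sum to $6>4$). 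Your conclusion is therefore correct, but the intermediate enumeration of allowed types and classes should be corrected as above; with that fix the argument coincides with the paper's.
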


\begin{remark}
This contradicts a statement of Abe (see \cite[Theorem 2.2]{Abe}).
The construction of the MCP manifold $M^6$ and its submanifold $N^4$ in Example \ref{exampleh3xh3biscompact} gives clearly a counterexample.
\end{remark}

\subsection{Relationship with $T$ and $J$-invariance}\label{paragraph-J-invariance}
Put $\rho= \alpha_2 \otimes Z_1 - \alpha_1 \otimes Z_2 $. One can easily see that a connected submanifold of $M$ is $\rho$-invariant if and only if it is tangent or orthogonal to the Reeb distribution. The following holds:

\begin{proposition}\label{prop-N-orthogonale}
Let $M'$ be a submanifold of the MCP manifold $M$. If $M'$ is orthogonal to the Reeb distribution, then none of the endomorphisms $\phi$, $J$ and $T$ leaves $M'$ invariant.
\end{proposition}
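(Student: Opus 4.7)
The plan is to reduce all three invariance statements to a single application of Proposition \ref{z1z2orthogonaux}, by observing that on a submanifold orthogonal to the Reeb distribution, the three endomorphisms $\phi$, $J$, and $T$ all agree.

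First I would note that if $M'$ is orthogonal to the Reeb distribution, then for every $p \in M'$ and every $X \in T_pM'$ we have $\alpha_1(X) = g(X, Z_1) = 0$ and $\alpha_2(X) = g(X, Z_2) = 0$. Substituting into the definitions
\[
J = \phi - \alpha_2 \otimes Z_1 + \alpha_1 \otimes Z_2, \qquad T = \phi + \alpha_2 \otimes Z_1 - \alpha_1 \otimes Z_2,
\]
this gives $JX = \phi X = TX$ for every $X \in T_pM'$. Consequently, $JT_pM' = TT_pM' = \phi T_pM'$, so $M'$ is $J$-invariant (respectively $T$-invariant) if and only if it is $\phi$-invariant.

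It therefore suffices to rule out $\phi$-invariance. Suppose for contradiction that $\phi T_pM' \subset T_pM'$ at every point $p \in M'$. Since $M'$ is orthogonal to the Reeb distribution by hypothesis, at every point of $M'$ both $Z_1$ and $Z_2$ are orthogonal to $T_pM'$, i.e.\ $(Z_i)_p = (Z_i^\bot)_p$ for $i=1,2$. This contradicts Proposition \ref{z1z2orthogonaux}, which asserts that on a $\phi$-invariant submanifold there is no point at which both Reeb vectors are orthogonal to the tangent space.

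The main (minor) subtlety here is just the bookkeeping of showing that the hypothesis of orthogonality to the full Reeb distribution immediately forces $\alpha_i$ to vanish on $TM'$; once that is in place, the identifications $J|_{TM'} = T|_{TM'} = \phi|_{TM'}$ follow directly and the result reduces to the already proven Proposition \ref{z1z2orthogonaux}. No curvature or connection computation is required.
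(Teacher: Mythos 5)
Your proof is correct and follows essentially the same route as the paper: the paper phrases the reduction via the endomorphism $\rho=\alpha_2\otimes Z_1-\alpha_1\otimes Z_2$ and the relations $\phi=J+\rho=T-\rho$, which on a submanifold orthogonal to the Reeb distribution is exactly your observation that $\rho$ vanishes on $TM'$ so that $\phi$, $J$ and $T$ coincide there. Both arguments then conclude by the same appeal to Proposition \ref{z1z2orthogonaux}.
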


\begin{proof}
Let $M'$ be a submanifold of $M$ orthogonal to the Reeb distribution. By Proposition \ref{z1z2orthogonaux}, it cannot be $\phi$-invariant.
Suppose that $M'$ is invariant with respect to $J$ or $T$. Since it is orthogonal to the Reeb vector fields, it is also $\rho$-invariant. Now by the relations $\phi=J+\rho=T-\rho$, we obtain that $M'$ is $\phi$-invariant, and this is not possible.
\end{proof}

\begin{proposition} \label{prop-equiv-JTphi}
Let $M'$ be a submanifold of the MCP manifold $M$. Then any two of the following 
properties imply the others:
\begin{enumerate}
\item[(a)] $M'$ is $\phi$-invariant,
\item[(b)] $M'$ is $J$-invariant,
\item[(c)] $M'$ is $T$-invariant,
\item[(d)]  $M'$ is tangent to the Reeb distribution.
\end{enumerate}

\end{proposition}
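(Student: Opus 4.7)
The plan is to exploit the linear relations among the four structure tensors. Setting $\rho=\alpha_2\otimes Z_1-\alpha_1\otimes Z_2$ as in the paragraph preceding the proposition, one has
\[
J=\phi-\rho,\qquad T=\phi+\rho,
\]
so that any two of $\phi,J,T$ determine the remaining one, as well as $\rho$, by linear combination. The idea is then to show that, together with Proposition~\ref{prop-N-orthogonale} and the characterization of $\rho$-invariant submanifolds recalled just above it, this gives all six implications at once.

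First I would handle the three cases in which (d) is one of the hypotheses. If $M'$ is tangent to the Reeb distribution, then for every $X\in TM'$ the vector $\rho(X)=\alpha_2(X)Z_1-\alpha_1(X)Z_2$ lies in $TM'$, so $M'$ is automatically $\rho$-invariant. From $J=\phi-\rho$ and $T=\phi+\rho$ it is then immediate that any one of (a), (b), (c) forces the other two; so (a)+(d), (b)+(d), (c)+(d) each imply all four.

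Next I would treat the three cases in which (d) is \emph{not} among the hypotheses, i.e.\ (a)+(b), (a)+(c), (b)+(c). In each case, the two assumed invariances together with the identities $\rho=\phi-J=T-\phi=\tfrac12(T-J)$ and $\phi=J+\rho=T-\rho=\tfrac12(J+T)$ show that $TM'$ is preserved by both $\rho$ and the remaining one of $\phi,J,T$; in particular all of (a), (b), (c) hold and $M'$ is $\rho$-invariant. By the observation preceding Proposition~\ref{prop-N-orthogonale}, $\rho$-invariance of a connected submanifold is equivalent to being tangent or orthogonal to the Reeb distribution. Since (a) holds, Proposition~\ref{prop-N-orthogonale} rules out the orthogonal case, so $M'$ must be tangent to the Reeb distribution, giving (d).

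The main (and only) subtle point is the dichotomy \emph{tangent vs.\ orthogonal} produced by $\rho$-invariance: it is precisely Proposition~\ref{prop-N-orthogonale} that lets us discard the orthogonal alternative as soon as $\phi$-invariance is available. Once this is used, the argument reduces to the purely algebraic manipulation of the identities relating $\phi$, $J$, $T$, and $\rho$, and no further computation is required.
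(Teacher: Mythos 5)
Your proof is correct and follows essentially the same route as the paper: replace (d) by $\rho$-invariance via the linear relations $J=\phi-\rho$, $T=\phi+\rho$, then use the tangent-or-orthogonal dichotomy for connected $\rho$-invariant submanifolds together with Proposition~\ref{prop-N-orthogonale} to exclude the orthogonal alternative. You merely spell out the case analysis (and should, like the paper, state explicitly that one may assume $M'$ connected, the conclusion being local); otherwise the argument coincides with the paper's.
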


\begin{proof}
From the relations $J=\phi-\rho$ and $T=\phi+\rho$, one can remark that any two of the four endomorphisms fields $\{\phi, J, T, \rho \}$ are linear combinations of the remaining two. So if we replace the property (d) with ``$M'$ is $\rho$-invariant'', then the conclusion is obvious. Suppose without loss of generality that $M'$ is connected. We have seen that the property ``$M'$ is $\rho$-invariant" is equivalent to ``$M'$ is tangent or orthogonal to the Reeb vector fields''. But by Proposition \ref{prop-N-orthogonale}, the property ``$M'$ is orthogonal to the Reeb distribution'' is not compatible with Properties (a) , (b) and (c), and this completes the proof.

\end{proof}

\begin{example}\label{example_J_notfi_invariant}
Consider $\mathbb{R}^{2h+2k+2}$ together with the normal MCP described in Example \ref{exampledarboux} with $h>0$. For any pair of integers $n_1, n_2$ such that $0< n_1\leq h$ and $0\leq n_2\leq k$, the $2(n_1+n_2)$-dimensional distribution spanned by the vector fields $Y_i=X_i+\frac{1}{2}x_iZ_1$, $JY_i$ for $i=1, \ldots ,n_1$ and (in the case $n_2 > 0$) by $Y^{\prime}_j=X^{\prime}_j+\frac{1}{2}x^{\prime}_jZ_2$, $JY^{\prime}_j$ for $j=1, \ldots ,n_2$ is completely integrable. On the open set $\{ x_i\neq 0, x^\prime_j\neq 0\}$ this distribution is invariant with respect to the complex structure $J$ but it is not invariant with respect to $\phi$. So it gives rise to a foliation by $2(n_1+n_2)$-dimensional  $J$-invariant submanifolds which are not $\phi$-invariant.
\end{example}

\section{Minimal $\phi$-invariant submanifolds}

In Section \ref{sect:invariant-submanifolds}, we observed that the leaves of the two characteristic foliations of an MCP with decomposable endomorphism $\phi$ are $\phi$-invariant submanifolds. Moreover, in \cite{BH4} we have seen that these submanifolds are minimal.
In this section, we extend this result to further $\phi$-invariant submanifolds of normal or {\it complex MCP} manifolds (the latter  terminology meaning that just one of the two natural almost complex structure is supposed to be integrable).

\begin{theorem}\label{th:phi-invariant Z_1 normal}
Let $(M, \alpha_1, \alpha_2, \phi, g)$ be a normal MCP manifold with decomposable $\phi$ and Reeb vector fields $Z_1$ and $Z_2$.
If $N$ is a $\phi$-invariant submanifold of $M$ such that $Z_1$ is tangent and $Z_2$ orthogonal to $N$, then $N$ is minimal. Moreover if $N$ is connected, then it is a Sasakian submanifold of one of the Sasakian leaves of the characteristic foliation of $\alpha_2$.
\end{theorem}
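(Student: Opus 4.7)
The strategy is to first establish $TN \subset T\mathcal{F}_2$, so that, by connectedness, $N$ lies in a single leaf of the characteristic foliation $\mathcal{F}_2$; this leaf is Sasakian with Reeb vector field $Z_1$ (recalled at the end of Section 2), and Proposition \ref{normal_induces_sasaki} then gives the Sasakian structure on $N$. Minimality is obtained in parallel by a direct trace computation of the second fundamental form $B$ in the ambient manifold $M$.

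The key tool for the reduction is the formula
\[
\nabla_X Z_2 = -\phi X_{h_1},
\]
valid for all $X \in \Gamma(TM)$, where $X_{h_1} \in \mathcal{H}_1$ denotes the $\mathcal{H}_1$-component of $X$. This is derived by plugging $Y = Z_2$ into \eqref{eq-SP-characterisation-connection} and using that $\nabla_X Z_2$ is horizontal, which in turn follows from the Killing property of $Z_2$ and from $\nabla_{Z_1} Z_2 = 0$ (both consequences of normality). For $X, Y \in \Gamma(TN)$ the Lie bracket $[X,Y]$ is tangent to $N$ and hence orthogonal to $Z_2$, giving
\[
g(\nabla_X Y, Z_2) = g(\nabla_Y X, Z_2).
\]
Because $g(Y, Z_2) = 0$ along $N$, the left-hand side equals $-g(Y, \nabla_X Z_2) = g(Y_{h_1}, \phi X_{h_1})$ (the $\mathcal{H}_2$ and $\mathcal{V}$ summands of $Y$ drop out since $\phi X_{h_1} \in \mathcal{H}_1$), and similarly the right-hand side equals $g(X_{h_1}, \phi Y_{h_1})$. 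But $\phi$ is skew on the horizontal bundle, so $g(Y_{h_1}, \phi X_{h_1}) = -g(X_{h_1}, \phi Y_{h_1})$, whence both sides vanish. Setting $Y = \phi X \in \Gamma(TN)$ and using decomposability in the form $(\phi X)_{h_1} = \phi X_{h_1}$ yields $g(X_{h_1}, \phi^2 X_{h_1}) = -|X_{h_1}|^2 = 0$, so $X_{h_1} = 0$. Hence $TN \subset \mathcal{H}_2 \oplus \mathbb{R} Z_1 = T\mathcal{F}_2$.

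For minimality, pick an orthonormal frame $\{Z_1, e_1, \phi e_1, \dots, e_n, \phi e_n\}$ of $TN$ with the $e_i$ horizontal; by the preceding step each $e_i \in \mathcal{H}_2$. Equation \eqref{eq-SP-characterisation-connection} then gives $(\nabla_{e_i} \phi) e_i = |e_i|^2 Z_1$ and $(\nabla_{\phi e_i} \phi) e_i = 0$. Since $\phi$ preserves both $TN$ and $(TN)^\perp$ (from $g(\phi \xi, Y) = -g(\xi, \phi Y) = 0$ for $\xi \perp N$ and $Y \in TN$), taking normal parts yields $B(e_i, \phi e_i) = \phi B(e_i, e_i)$ and $B(\phi e_i, \phi e_i) = \phi B(e_i, \phi e_i) = \phi^2 B(e_i, e_i) = -B(e_i, e_i)$; the last equality uses that $B(e_i, e_i)$ is orthogonal to $Z_1$ (tangent to $N$) and to $Z_2$ (a short computation from $\nabla_{e_i} Z_2 = -\phi(e_i)_{h_1} = 0$). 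Together with $B(Z_1, Z_1) = (\nabla_{Z_1} Z_1)^\perp = 0$, the trace of $B$ vanishes.

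The main obstacle is the first step: recognizing that imposing symmetry of $B$ against the single normal direction $Z_2$ already suffices to annihilate the $\mathcal{H}_1$-component of every tangent vector of $N$. With that reduction in hand, both the Sasakian statement and the minimality fall out transparently from the formulas of Section 3.
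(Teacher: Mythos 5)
Your proof is correct, and it reaches the two key points of the paper's argument --- the reduction $TN\subset T\mathcal{F}_2$ and the trace identity $\B(X,Y)=-\B(\phi X,\phi Y)$ plus $\B(Z_1,Z_1)=0$ --- by a somewhat different mechanism. The paper obtains the reduction by comparing \eqref{eq-SP-characterisation-connection} with the Sasakian identity $(\tilde{\nabla}_X\tilde{\phi})Y=\tilde{g}(X,Y)Z_1-\tilde{\alpha_1}(Y)X$ furnished by Proposition \ref{normal_induces_sasaki}: the difference equals $\B(X,\phi Y)-\phi\B(X,Y)$, which is normal, while the right-hand side $g(X_2,Y_2)(Z_2-Z_1)$ has a tangential $Z_1$-component, forcing $g(X_2,Y_2)=0$. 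You instead stay entirely in the ambient manifold: you extract $\nabla_X Z_2=-\phi X_{h_1}$ from \eqref{eq-SP-characterisation-connection} with $Y=Z_2$, and then exploit the symmetry of $g(\B(\cdot,\cdot),Z_2)$ together with the skewness of $\phi$ to kill the $\mathcal{H}_1$-component of $TN$; your subsequent trace computation also uses \eqref{eq-SP-characterisation-connection} directly on the frame vectors rather than the induced Sasakian connection. Both routes rest on Theorem \ref{SP-characterisation-connection}, so the difference is one of packaging: your version is more self-contained (Proposition \ref{normal_induces_sasaki} is only needed for the final Sasakian statement, not for minimality), at the cost of needing the auxiliary formula for $\nabla_X Z_2$. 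On that point, your appeal to the Killing property of $Z_2$ is not strictly necessary: horizontality of $\nabla_X Z_2$ follows already from $i_{Z_1}d\alpha_2=0$ and $\nabla_{Z_1}Z_2=0$ (cited in the paper from \cite{BH2}), since $d\alpha_2(X,Z_1)=\tfrac12\bigl[g(\nabla_X Z_2,Z_1)-g(\nabla_{Z_1}Z_2,X)\bigr]$, or alternatively from \eqref{eq4-gen-properties}; stating it this way keeps the argument within facts the paper itself quotes.
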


\begin{proof}
Denote by $\B$ the second fundamental form of the submanifold $N$, by 
$\nabla$  the Levi-Civita connection of the metric $g$ on $M$, and by $\tilde{\nabla}$ its induced connection on $N$. By Proposition \ref{normal_induces_sasaki}, $(\alpha_1,Z_1,\phi,g)$ induces a Sasakian structure on $N$, say $(\tilde{\alpha_1},Z_1,\tilde{\phi},\tilde{g})$.
Then for every $X$, $Y\in \Gamma(TN)$ we have $\left( \tilde{\nabla}_X \tilde{\phi} \right)Y=\tilde{g}(X,Y)Z_1-\tilde{\alpha_1}(Y)X$ (see e.g. \cite{Blairbook}).
Using this and \eqref{eq-SP-characterisation-connection} 
for all $X$, $Y\in \Gamma(TN)$ orthogonal to $Z_1$,
we obtain

\begin{equation}\label{2nd_Fund_form_phi}
\B(X,\phi Y)-\phi \B(X,Y)=\left( \nabla_X \phi \right)Y-\left( \tilde{\nabla}_X \tilde{\phi} \right)Y=g(X_2,Y_2)(Z_2-Z_1)
\end{equation}
since $X$, $Y$ are 
 horizontal, 
 $X_2$ and $Y_2$ being respectively the orthogonal projections of $X$ and $Y$  on $T\FF_1$.
 But the vector field 
$\B(X,\phi Y)-\phi \B(X,Y)$ must be orthogonal to $N$ by the $\phi$-invariance of $N$. Then $g(X_2,Y_2)=0$ for every $X$, $Y\in \Gamma(TN)$ orthogonal to $Z_1$, which gives $X_2=0$. This implies that $N$ is tangent to the characteristic distribution of $\alpha_2$.

Equation \eqref{2nd_Fund_form_phi} becomes
$$
\B(X,\phi Y)-\phi \B(X,Y)=0.
$$
If we interchange the roles of $X$ and $Y$ and take the difference, we get $\B(X,\phi Y)=\B(Y,\phi X)$ which implies
$\B(X,Y)=-\B(\phi X,\phi Y)$. Now locally, take an orthonormal  $\phi$-basis of the metric contact structure on $N$
$$
Z_1,e_1, \phi e_1, \ldots , e_s , \phi e_s.
$$
We have $\B(Z_1,Z_1)=0$ since $\nabla_{Z_1}Z_1=0$ (see \cite{BH2}).
As $e_j$ are orthogonal to  $Z_1$, we obtain:
$$
\tr (\B)=\B(Z_1 ,Z_1)+\sum_{j=1} ^s\left( \B(e_j,e_j)+ \B(\phi e_j,\phi e_j)\right)=0,
$$
which means that $N$ is minimal.
\end{proof}

Consider a $\phi$-invariant submanifold $N$ of an MCP which is nowhere tangent and nowhere orthogonal to the Reeb vector fields.
In Proposition \ref{prop_Ntransverse}, we have seen that at every point its tangent space intersects the Reeb distribution giving rise to the distribution on $N$ spanned by $Z_1^T$ (or equivalently by $Z_2^T$). For such a submanifold we have

\begin{theorem}\label{th:phi-invariant-transverse}
Let $(M, \alpha_1, \alpha_2, \phi, g)$ be a normal metric contact pair manifold with decomposable $\phi$ and Reeb vector fields $Z_1$ and $Z_2$.
Let $N$ be a $\phi$-invariant submanifold of $M$ nowhere tangent and nowhere orthogonal to $Z_1$ and $Z_2$.
Then $N$ is minimal if and only if the angle between $Z_1^T$ and $Z_1$ (or equivalently $Z_2$) is constant along the integral curves of $Z_1^T$.
\end{theorem}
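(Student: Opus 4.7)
The plan is to compute the mean curvature vector $\vec H$ of $N$ in an adapted orthonormal frame and reduce it to a single scalar in one distinguished normal direction. By Proposition~\ref{prop_Ntransverse}, along $N$ we split $\VV|_N=\mathbb{R}\hat\xi\oplus\mathbb{R}\hat\eta$, with $\hat\xi\in TN$ and $\hat\eta\in TN^\bot$ both unit vertical. Writing
\[
Z_1=\cos\theta\,\hat\xi+\sin\theta\,\hat\eta,\qquad Z_2=-\sin\theta\,\hat\xi+\cos\theta\,\hat\eta,
\]
we see $|Z_1^T|=|\cos\theta|$, so $\theta$ is precisely the angle between $Z_1^T$ and $Z_1$; the corresponding angle for $Z_2$ is $\pi/2-\theta$, which accounts for the ``equivalently'' in the statement. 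I then extend $\hat\xi$ to a local orthonormal frame $\hat\xi,e_1,\phi e_1,\ldots,e_s,\phi e_s$ of $TN$, using that $\phi$ acts as an almost complex structure on the horizontal part of $TN$. A useful preliminary observation is that antisymmetry of $\phi$ together with $\phi(TN)\subset TN$ forces $\phi(TN^\bot)\subset TN^\bot$.

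Next, write a horizontal $X\in TN$ as $X=X'+X''$ with $X'\in\mathcal{H}_1$, $X''\in\mathcal{H}_2$. Taking the normal component of \eqref{eq-SP-characterisation-connection} gives
\[
\B(X,\phi Y)-\phi\,\B(X,Y)=g(X'',Y'')\,Z_1^\bot+g(X',Y')\,Z_2^\bot
\]
for $X,Y\in TN$ horizontal. Setting $Y=X$, then replacing $X$ by $\phi X$ (using $\phi^2X=-X$ on horizontal $X$ and decomposability of $\phi$) and adding via the symmetry $\B(X,\phi X)=\B(\phi X,X)$ gives
\[
-\phi\bigl(\B(X,X)+\B(\phi X,\phi X)\bigr)=2\bigl(|X''|^2\,Z_1^\bot+|X'|^2\,Z_2^\bot\bigr).
\]
The left-hand side lies in $\phi(TN^\bot)\subset(\mathcal{H}_1\oplus\mathcal{H}_2)\cap TN^\bot$ (horizontal normal), while the right-hand side lies in $\mathbb{R}\hat\eta\subset\VV\cap TN^\bot$ (vertical normal); as these are complementary in $TN^\bot$, both sides vanish. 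In particular $\B(X,X)+\B(\phi X,\phi X)$ is proportional to $\hat\eta$.

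To pin down the coefficient, I use the split formulas $\nabla_X Z_1=-\phi X''$ and $\nabla_X Z_2=-\phi X'$ (which follow from $\nabla_X Z=-\phi X$ combined with $\ker d\alpha_i=\mathcal{H}_i\oplus\VV$ and antisymmetry of $\nabla Z_i$). A direct computation yields
\[
\nabla_X\hat\eta=X(\theta)\,\hat\xi-\sin\theta\,\phi X''-\cos\theta\,\phi X'.
\]
For a horizontal tangent $X=e_j$ (so $g(e_j,\hat\xi)=0$) the Weingarten formula together with the antisymmetry of $\phi$ on each $\mathcal{H}_i$ gives
\[
g(\B(e_j,e_j),\hat\eta)=-g(e_j,\nabla_{e_j}\hat\eta)=\sin\theta\,g(e_j'',\phi e_j'')+\cos\theta\,g(e_j',\phi e_j')=0,
\]
and similarly for $\phi e_j$. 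Combined with the previous paragraph, the horizontal trace $\sum_k\B(f_k,f_k)$ has vanishing $\hat\eta$-component and no other normal component, hence it is zero.

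Finally, $\hat\xi$ is vertical, so $\hat\xi_{\mathcal{H}_i}=0$ and thus $\nabla_{\hat\xi}Z_i=0$; this gives $\nabla_{\hat\xi}\hat\xi=-\hat\xi(\theta)\,\hat\eta$, which is normal. Therefore $\B(\hat\xi,\hat\xi)=-\hat\xi(\theta)\,\hat\eta$ and
\[
\vec H=-\hat\xi(\theta)\,\hat\eta,
\]
so $N$ is minimal iff $\hat\xi(\theta)=0$, i.e.\ $\theta$ is constant along integral curves of $\hat\xi$ (equivalently of $Z_1^T$, which is a positive scalar multiple of $\hat\xi$). The main obstacle is the subspace-splitting argument in the second paragraph: it is what collapses a potentially large normal-bundle computation into a single scalar identity, and it relies crucially on decomposability of $\phi$ — without it, both the splitting and the antisymmetry trick used in the $\hat\eta$-component trace would fail.
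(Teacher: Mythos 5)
Your argument is correct and reaches the paper's conclusion along the same overall skeleton: adapted frame $\hat\xi,e_1,\phi e_1,\ldots,e_s,\phi e_s$, the normal component of \eqref{eq-SP-characterisation-connection}, vanishing of the horizontal part of the trace, and $\B(\hat\xi,\hat\xi)=\pm\hat\xi(\theta)$ times the unit vertical normal. Where you genuinely differ is the horizontal cancellation. The paper only remarks that the right-hand side of the normal-component identity is symmetric in $(X,Y)$, which gives $\B(X,\phi Y)=\B(\phi X,Y)$, hence $\B(X,Y)=-\B(\phi X,\phi Y)$, so each term $\B(e_j,e_j)+\B(\phi e_j,\phi e_j)$ vanishes with no further input. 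You instead add the identities for $X$ and $\phi X$ and use the horizontal/vertical splitting to conclude the pair-sum lies in $\mathbb{R}\hat\eta$ (as a by-product you obtain the genuine extra constraint $\lVert X''\rVert^2 Z_1^\bot+\lVert X'\rVert^2 Z_2^\bot=0$, which the paper does not record), and you then kill the $\hat\eta$-component by a Weingarten computation that requires the refined formulas $\nabla_XZ_1=-\phi X''$ and $\nabla_XZ_2=-\phi X'$. These formulas are true for a normal MCP with decomposable $\phi$, but your justification (antisymmetry of $\nabla Z_i$, i.e.\ $Z_i$ Killing) is asserted rather than proved; to close this, set $Y=Z_1$ in \eqref{eq-SP-characterisation-connection} to get $\phi\nabla_XZ_1=X_1-\alpha_1(X)Z_1$, and note $g(\nabla_XZ_1,Z_1)=0$ and $g(\nabla_XZ_1,Z_2)=g(\nabla_XZ,Z_2)=-g(\phi X,Z_2)=0$, which yields the split formulas directly. (The paper needs only $\nabla_{Z_i}Z_j=0$ for the vertical term, so its route is shorter.) Two harmless caveats: your ansatz $Z_2=-\sin\theta\,\hat\xi+\cos\theta\,\hat\eta$ presupposes that $(\hat\xi,\hat\eta)$ and $(Z_1,Z_2)$ are equi-oriented in $\VV$, so either choose the signs of $\hat\xi,\hat\eta$ accordingly or allow a $\pm$ (in the Heisenberg example of the paper, with $\hat\xi$ along $Z_1^T$ and $\hat\eta$ along $Z_1^\bot$, one gets the opposite sign); and the angle in the statement is $|\theta|$, whose constancy along integral curves is equivalent to that of $\theta$ because $\theta$ never vanishes under the transversality hypotheses.
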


\begin{proof}
Put $\zeta=\frac{1}{\Vert Z_1^T\Vert}Z_1^T=\pm\frac{1}{\Vert Z_2^T\Vert}Z_2^T$. Using \eqref{eq-SP-characterisation-connection} 
for all $X$, $Y\in \Gamma(TN)$ orthogonal to $\zeta$,
we obtain

\begin{equation*}\label{2nd_Fund_form_phi}
\B(X,\phi Y)-\phi \B(X,Y)=\left(\left( \nabla_X \phi \right)Y\right)^\bot =g(X_1,Y_1)Z_1^\bot+g(X_2,Y_2)Z_2^\bot
\end{equation*}
since $X$, $Y$ are 
horizontal 
because they are necessarily orthogonal to $Z_1$ and $Z_2$. The term on the right  is symmetric on $(X,Y)$, then we get
$$
\B(X,\phi Y)-\B( \phi X,Y)=0.
$$
As previously this yields 
$\B(X,Y)=-\B(\phi X,\phi Y)$.
Now take a local orthonormal basis on $N$ in this manner
$$
\zeta,e_1, \phi e_1, \ldots , e_s , \phi e_s.
$$
We obtain
$$
\tr (\B)=\B(\zeta ,\zeta)+\sum_{j=1} ^s\left( \B(e_j,e_j)+ \B(\phi e_j,\phi e_j)\right)=\B(\zeta ,\zeta).
$$

In order to compute $\B(\zeta ,\zeta)$, 
observe that there exists a smooth function $\theta$ on $N$ taking nonzero values in $] -\pi/2,\pi/2[$ and for which 
$\zeta=(\cos\theta) Z_1+(\sin\theta) Z_2$.
This function is well defined on $N$ since $\zeta$ lies in $\mathcal{V}=\mathbb{R} Z_1 \oplus \mathbb{R} Z_2$ and $g(\zeta, Z_1)>0$. It represents the oriented angle $(Z_1,\zeta)$ in the oriented orthonormal basis $(Z_1,Z_2)$ of  $\mathcal{V}$ along $N$. One can easily show that 
$Z_1^T=(\cos \theta) \zeta$, $Z_2^T=(\sin \theta) \zeta$ and then, since $J\zeta=-(\sin\theta) Z_1+ (\cos\theta)Z_2$, we have $Z_1^\bot=-(\sin \theta) J\zeta$ and $Z_2^\bot=(\cos \theta) J\zeta$. Hence $J\zeta$ is a nonvanishing section of the normal bundle $TN^\bot$ of $N$ in $M$.

Using the equations $\nabla_{Z_i}Z_j=0$, for $i,j=1,2$, concerning  MCP's \cite{BH2}, we obtain 
$$
\nabla_\zeta \zeta=\zeta(\theta)J\zeta .
$$
This yields
$$
\tr (\B)=\B(\zeta,\zeta)=\zeta(\theta)J\zeta
$$
which is zero if and only if $\zeta(\theta)=0$.
\end{proof}
\begin{example}
For the $\phi$-invariant submanifolds described in Example \ref{exampleh3xh3}, the Reeb vector fields $X_3$ and $Y_3$ make a constant angle with their orthogonal projection $\frac{1}{2}(X_3+Y_3)$. Hence they are minimal.
\end{example}

A theorem of Vaisman \cite{Vaisman} states that on a Vaisman manifold a complex submanifold inherits the structure of Vaisman manifold if and only if it is minimal or equivalently if and only if it is tangent to the Lee vector field (and therefore tangent to the anti-Lee one). This result has been generalized to the lcK manifolds as follows (see \cite[Theorem 12.1]{Ornea}): a complex submanifold of an lcK manifold is minimal if and only if it is tangent to the Lee vector field. Non-K\"ahler Vaisman manifolds are special lcK manifolds. According to \cite{BK2} they are, up to a constant rescaling of the metric, exactly normal MCP manifolds of type $(h,0)$, the Reeb vector fields being the Lee and the anti-Lee vector field. What follows is a generalization of the theorem of Vaisman to complex MCP manifolds of any type $(h,k)$.

\begin{theorem}\label{th-j-integrable-minimal}
Let $(M, \alpha_1, \alpha_2, \phi, g)$ be an MCP manifold with decomposable $\phi$ and Reeb vector fields $Z_1$ and $Z_2$. Suppose that the almost complex structure  $J=\phi  - \alpha_2 \otimes Z_1 + \alpha_1 \otimes Z_2 $ is integrable. Then a $J$-invariant submanifold $N$ of $M$ is minimal if and only if it is tangent to the Reeb distribution.
\end{theorem}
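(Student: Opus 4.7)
The strategy is to recognize $(M, J, g)$ as a Hermitian manifold and exploit the Hermitian geometry of the complex submanifold $N$. A direct check using the associated-metric relations $g(X, \phi Y) = (d\alpha_1 + d\alpha_2)(X, Y)$ and $g(X, Z_i) = \alpha_i(X)$ shows that $g(JX, JY) = g(X, Y)$, so $J$ is an isometry; together with the integrability of $J$ this makes $(M, J, g)$ Hermitian and $N$ a complex submanifold. Since $J$ is $g$-skew, $TN^\perp$ is also $J$-invariant, and the Gauss equation yields
\begin{equation*}
\B(X, JY) - J\, \B(X, Y) \;=\; \bigl((\nabla_X J) Y\bigr)^\perp, \qquad X, Y \in \Gamma(TN),
\end{equation*}
where $\B$ denotes the second fundamental form.

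To compute $\nabla J$ I would work with the K\"ahler form $\omega(X, Y) := g(JX, Y)$, which the associated-metric relations express as a concrete linear combination of $d\alpha_1$, $d\alpha_2$ and $\alpha_1 \wedge \alpha_2$; its exterior derivative involves only terms of the form $d\alpha_i \wedge \alpha_j$. Substituting this into the standard almost-Hermitian identity relating $\nabla J$ to $d\omega$ and the Nijenhuis tensor of $J$ (which vanishes by hypothesis), and using $i_{Z_i}\, d\alpha_j = 0$, I would show that $(\nabla_X J) X$ and $(\nabla_{JX} J) X$ are both vertical for horizontal $X \in \Gamma(TM)$, and that they satisfy the symmetry $(\nabla_{JX} J) X = J (\nabla_X J) X$. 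From the Gauss identity this yields the key formula: for horizontal $X \in \Gamma(TN)$,
\begin{equation*}
\B(X, X) + \B(JX, JX) \;\in\; \Span\bigl(Z_1^\perp,\, Z_2^\perp\bigr),
\end{equation*}
with scalar coefficients given explicitly in terms of $d\alpha_i(X, JX)$.

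For the trace one needs a $J$-adapted orthonormal frame of $TN$. Since $\VV_p$ is $2$-dimensional and $J$-invariant, so is $\VV_p \cap T_p N$, which therefore has dimension $0$ or $2$; Proposition~\ref{prop-N-orthogonale} excludes the case $\VV_p \subseteq T_pN^\perp$, giving the dichotomy: either $\VV_p \subseteq T_pN$ (i.e.\ $N$ is tangent to the Reeb distribution at $p$) or $\VV_p \cap T_pN = \{0\}$. Because $J Z_1^T = Z_2^T$, the function $\lambda := \|Z_1^T\| = \|Z_2^T\|$ is smooth and positive on $N$, equal to $1$ in the tangent case and lying in $(0, 1)$ in the transverse case. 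I pick locally an orthonormal $J$-adapted frame $(u_1, u_2, e_1, Je_1, \dots, e_s, Je_s)$ of $TN$ with $u_1 := Z_1^T/\lambda$ and $u_2 := Ju_1 = Z_2^T/\lambda$; the $e_j, Je_j$ lie in $\Span(u_1, u_2)^\perp \cap TN$ and are therefore horizontal (being orthogonal to $Z_i^T$ in $TN$ and to $Z_i^\perp$ by being tangent to $N$, hence orthogonal to each $Z_i$).

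The direction $(\Leftarrow)$ is then immediate: tangency to the Reeb distribution forces $u_i = Z_i$ and $Z_i^\perp = 0$; the MCP identity $\nabla_{Z_i} Z_j = 0$ gives $\B(u_i, u_i) = 0$, and every horizontal summand $\B(e_j, e_j) + \B(Je_j, Je_j)$ vanishes by the key formula, whence $\tr\, \B = 0$ and $N$ is minimal. For $(\Rightarrow)$ I argue by contradiction: assume $N$ minimal but not tangent to the Reeb distribution on some open set, so that $\lambda \in (0, 1)$ and $Z_1^\perp, Z_2^\perp$ are linearly independent in $TN^\perp$. Writing $u_1 = \lambda Z_1 - \lambda^{-1} W$ with $W$ the horizontal vector obtained from $-Z_1^\perp$, and $u_2 = Ju_1 = \lambda Z_2 - \lambda^{-1} \phi W$, I expand $\B(u_1, u_1) + \B(u_2, u_2)$ by bilinearity; thanks to $\nabla_{Z_i} Z_j = 0$ the purely vertical and mixed contributions collapse, reducing the $u$-part to an expression involving only $\nabla W$ and the normal components of $\nabla Z_i$. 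Added to the horizontal trace this realizes $\tr\, \B$ as an explicit vector in $\Span(Z_1^\perp, Z_2^\perp)$. The main obstacle is then to show that this vector is nonzero, which I expect to handle using the non-degeneracy of $d\alpha_i$ on $\HHH_i$ and the splitting $W = W_1 + W_2$ with $W_i \in \HHH_i$; the resulting contradiction forces $N$ to be tangent to the Reeb distribution, completing the proof.
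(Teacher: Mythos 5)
Your setup (recognizing $(M,J,g)$ as Hermitian, the Gauss identity $\B(X,JY)-J\B(X,Y)=((\nabla_XJ)Y)^\perp$, computing $\nabla J$ from $d\Phi$ and the vanishing Nijenhuis tensor, and the $J$-adapted frame with $u_1=Z_1^T/\lambda$, $u_2=Ju_1$) is the same as the paper's, and your proof of the direction ``tangent $\Rightarrow$ minimal'' is correct. The genuine gap is the converse, which is the heart of the theorem and which you explicitly leave as an ``expectation.'' Two things go wrong with your plan. First, your key formula is stated only for \emph{horizontal} $X$ and with coefficients left as ``terms in $d\alpha_i(X,JX)$''; the decisive point is that these coefficients are sign-definite, namely $\B(X,X)+\B(JX,JX)=-2\lVert\pi_2X\rVert^2Z_1^\perp+2\lVert\pi_1X\rVert^2Z_2^\perp$ (plus terms weighted by $\alpha_i(X)$ for non-horizontal $X$), obtained from $d\alpha_2(X,\phi X)=-\lVert\pi_2X\rVert^2$ via the associated metric and decomposability. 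The paper derives this formula for \emph{arbitrary} tangent arguments, so it also applies to $e_1=Z_1^T/\lVert Z_1^T\rVert$, $Je_1$, producing the extra term $(\pi_2Z_1^T-\pi_1Z_2^T)^\perp$ in the mean curvature; pairing $\tr\B$ with $Z_1^\perp$ and $Z_2^\perp$ then gives $0=-\lVert\pi_2Z_1^T\rVert^2-\lVert Z_1^\perp\rVert^2\sum_l\lVert\pi_2e_l\rVert^2$ and $0=\lVert\pi_1Z_2^T\rVert^2+\lVert Z_2^\perp\rVert^2\sum_l\lVert\pi_1e_l\rVert^2$, sums of terms of one sign, forcing $\pi_1Z_1^T=\pi_2Z_1^T=0$, hence $Z_1^T$ vertical and proportional to $Z_1$, hence $Z_1$ (and $Z_2=JZ_1$) tangent wherever $Z_1^T\neq0$. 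Second, your alternative route of expanding $\B(u_1,u_1)+\B(u_2,u_2)$ through $\nabla_WZ_i$, $\nabla_{u_1}W$ and ``normal components of $\nabla Z_i$'' is not workable as described: the hypothesis is only integrability of $J$ (the MCP need not be normal), so there is no clean formula for $\nabla_XZ_1$ and $\nabla_XZ_2$ separately (only $\nabla_X(Z_1+Z_2)=-\phi X-\phi\h X$), and the terms coming from derivatives of $\lambda$ point along $Z_1^\perp$, $Z_2^\perp$ and do not visibly collapse. Without the sign argument (or an explicit evaluation of $(\nabla_XJ)Y$ on non-horizontal arguments, which is what the paper does), the non-vanishing you need for the contradiction is exactly what remains unproved.

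A secondary, fixable issue: your claim that $\lambda=\lVert Z_1^T\rVert$ is positive on all of $N$ is not justified by Proposition \ref{prop-N-orthogonale}, which concerns a submanifold orthogonal to the Reeb distribution, not a single point where $Z_1^T$ vanishes; likewise Proposition \ref{z1z2orthogonaux} is about $\phi$-invariant submanifolds. The paper only rules out $Z_1^T\equiv0$ on an \emph{open} subset of $N$ (such a piece would be $J$-invariant and orthogonal to the Reeb distribution, hence $\rho$- and $\phi$-invariant, contradicting Proposition \ref{z1z2orthogonaux}), then runs the minimality argument on the open set $N'=\{Z_1^T\neq0\}$, which is dense, and recovers $N=N'$ by continuity of $Z_1^T$. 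You should structure your argument the same way rather than asserting $\lambda>0$ pointwise at the outset.
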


We have the same conclusion if we replace  $J$ with the almost complex structure $T=\phi  + \alpha_2 \otimes Z_1 - \alpha_1 \otimes Z_2 $. Recall that for a submanifold tangent to the Reeb distribution, we have equivalence between invariance with respect to $J$, $T$ and $\phi$ (see Proposition \ref{prop-equiv-JTphi}).
Hence minimal $J$-invariant submanifolds of an MCP  are necessarily $\phi$-invariant and $T$-invariant. We can restate Theorem \ref{th-j-integrable-minimal} for a normal MCP as follows:
\begin{corollary}
Let $(M, \alpha_1, \alpha_2, \phi, g)$ be a normal MCP manifold with decomposable $\phi$. Then a $J$-invariant submanifold $N$ of $M$ is minimal if and only if it is $T$-invariant.
\end{corollary}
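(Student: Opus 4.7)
My plan is to compute the mean curvature vector of $N$ via the Gauss identity applied to $J$ and to identify it, up to a nonvanishing function, with a combination of the normal components $Z_1^{\perp}$ and $Z_2^{\perp}$ of the Reeb vector fields. Since $JZ_1 = Z_2$ and $JZ_2 = -Z_1$, the $J$-invariance of $TN$ already forces $Z_1 \in TN \Leftrightarrow Z_2 \in TN$, so the vanishing of both $Z_1^{\perp}$ and $Z_2^{\perp}$ is equivalent to $N$ being tangent to the Reeb distribution. The starting point is that $g$ is Hermitian with respect to $J$ (a direct check from $g(\phi X,\phi Y) = g(X,Y) - \alpha_1(X)\alpha_1(Y) - \alpha_2(X)\alpha_2(Y)$ and the definition of $J$), so $J$ preserves both $TN$ and $TN^{\perp}$, and Gauss' formula reads
\begin{equation*}
\B(X, JY) - J\,\B(X, Y) = \bigl((\nabla_X J) Y\bigr)^{\perp}, \qquad X, Y \in \Gamma(TN).
\end{equation*}
Substituting $J = \phi - \alpha_2 \otimes Z_1 + \alpha_1 \otimes Z_2$ expresses the right-hand side in terms of $(\nabla_X \phi)Y$, $\nabla_X Z_i$, and $(\nabla_X \alpha_i)(Y)$; the general MCP identity $\nabla_{Z_i} Z_j = 0$ and the vanishing of the Nijenhuis tensor of $J$ are the two key tools.

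For the direction ``tangent to the Reeb distribution $\Rightarrow$ minimal'', by Proposition~\ref{prop-equiv-JTphi} the submanifold $N$ is then also $\phi$-invariant, so I would work with a local $\phi$-adapted orthonormal frame $Z_1, Z_2, e_1, \phi e_1, \ldots, e_s, \phi e_s$ of $TN$. The Reeb-direction terms $\B(Z_i, Z_j)$ vanish since $\nabla_{Z_i} Z_j = 0$; on horizontal $X, Y \in TN$ the $\alpha_i(Y)$ and $(\nabla_X \alpha_i)(Y)$ contributions in the identity above disappear, and the decomposability of $\phi$ forces the horizontal part of $(\nabla_X \phi)Y$ to vanish. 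Hence $\B(X, \phi Y) = \phi\,\B(X, Y)$ on the horizontal part of $TN$, and summing the terms $\B(e_j, e_j) + \B(\phi e_j, \phi e_j)$ yields $\tr(\B) = 0$ by exactly the pairing argument used in the proofs of Theorems~\ref{th:phi-invariant Z_1 normal} and \ref{th:phi-invariant-transverse}.

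For the reverse implication I argue by contrapositive. If $N$ is not tangent to the Reeb distribution, Proposition~\ref{prop-N-orthogonale} rules out orthogonality, so $Z_1^{\perp}$ and $Z_2^{\perp}$ are not both zero on some open subset. Tracing $\B$ in an adapted basis, the horizontal contribution is as in the ``if'' direction, while the surviving vertical terms
\begin{equation*}
-\alpha_2(Y)\nabla_X Z_1 + \alpha_1(Y)\nabla_X Z_2 + (\nabla_X \alpha_1)(Y)\,Z_2 - (\nabla_X \alpha_2)(Y)\,Z_1
\end{equation*}
in the expansion of $(\nabla_X J)Y$ produce the required nonvanishing normal vertical part of the mean curvature. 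The main obstacle is precisely this last step: since we assume only that $J$ is integrable and not the full normality of the MCP, the connection formulas of Proposition~\ref{general-properties} are unavailable, and the standard lcK formula $H = -(\dim_{\mathbb{C}} N)\,B^{\perp}$ (with $B$ the Lee field) is not directly at hand. The cleanest route I see is to invoke the Hermitian identity expressing $g((\nabla_X J)Y, W)$ through $d\Omega_J$ (valid because the Nijenhuis tensor of $J$ vanishes), together with the explicit form $\Omega_J = (d\alpha_1 + d\alpha_2) - \alpha_1 \wedge \alpha_2$ and hence $d\Omega_J = \alpha_1 \wedge d\alpha_2 - \alpha_2 \wedge d\alpha_1$, to isolate a nonvanishing proportionality between the vertical normal part of $\tr(\B)$ and a specific combination of $Z_1^{\perp}$ and $Z_2^{\perp}$.
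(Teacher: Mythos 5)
Your strategy reproves Theorem \ref{th-j-integrable-minimal} from scratch (deliberately avoiding normality) rather than proving the corollary, which for a normal MCP is an immediate restatement of that theorem: $J$ is integrable, so minimality of the $J$-invariant $N$ is equivalent to tangency to the Reeb distribution, and for a $J$-invariant submanifold tangency is equivalent to $T$-invariance by Proposition \ref{prop-equiv-JTphi} (orthogonality being excluded by Proposition \ref{prop-N-orthogonale}). This last equivalence is in fact missing from your proposal: your opening observation that $J$-invariance gives $Z_1\in TN \Leftrightarrow Z_2\in TN$ reduces the problem to tangency of the Reeb distribution, but the statement to be proved concerns $T$-invariance, and the bridge is precisely Proposition \ref{prop-equiv-JTphi}, which you invoke only for the other direction. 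Your worry that Proposition \ref{general-properties} is unavailable is also moot here, since the corollary assumes a normal MCP.

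The genuine gap is in the direction ``minimal $\Rightarrow$ tangent''. You exhibit the vertical terms of $(\nabla_X J)Y$ and assert that they ``produce the required nonvanishing normal vertical part of the mean curvature'', ending with a plan (``the cleanest route I see\dots'') rather than an argument. But the trace of $\B$ is a sum of several terms --- in the paper's formula \eqref{eq-meancurvature} a horizontal normal term $(\pi_2 Z_1^T-\pi_1 Z_2^T)^\bot$ together with terms along $Z_1^\bot$ and $Z_2^\bot$ --- and nothing in your proposal rules out cancellation among them. The paper's proof does exactly this work: it pairs $H$ with $Z_1^\bot$ and $Z_2^\bot$, uses $Z_2^\bot=JZ_1^\bot$, $\pi_1 Z_2^T=J\pi_1 Z_1^T$ and the pairwise orthogonality of $\mathcal H_1,\mathcal H_2,\mathbb R Z_1,\mathbb R Z_2$ to obtain the sign-definite identities \eqref{eq-scalarproductZH}, forcing $\pi_2 Z_1^T=\pi_1 Z_2^T=0$ and $\lVert Z_i^\bot\rVert^2\sum_l\lVert\pi_j e_l\rVert^2=0$; and it still needs a density/continuity argument via Proposition \ref{z1z2orthogonaux} to pass from the open set $N'=\{Z_1^T\neq 0\}$, where the adapted frame with $e_1=Z_1^T/\lVert Z_1^T\rVert$ exists, to all of $N$. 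Neither the sign analysis nor the locus $Z_1^T=0$ is addressed in your proposal, so the reverse implication is not established. A smaller point: in the ``if'' direction, decomposability of $\phi$ alone does not make the horizontal part of $(\nabla_X\phi)Y$ vanish; that vanishing comes from normality via \eqref{eq-SP-characterisation-connection} (or from integrability of $J$ through the $d\Phi$-formula), and once you allow normality the shortest complete proof is simply Theorem \ref{th-j-integrable-minimal} combined with Proposition \ref{prop-equiv-JTphi}.
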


The $J$-invariant submanifolds described in Example \ref{example_J_notfi_invariant} are not minimal. Of course they are not tangent to the Reeb distribution.

We have seen that the MCP on $\HH_3 \times \HH_3$, given in Example \ref{exampleh3xh3}, is normal because each factor is a Sasakian manifold. The submanifolds decribed in Example \ref{exampleh3xh3bis} are tangent to the Reeb distribution and then they are minimal. The following statement gives further interesting examples.

\begin{corollary}
Consider an MCP $( \alpha_1, \alpha_2, \phi, g)$ with decomposable $\phi$ on a manifold. Suppose that $J$ (or $T$) is integrable. Then the leaves of the characteristic foliations $\mathcal{G} _1$ and $\mathcal{G} _2$ of $d\alpha_1$ and $d\alpha_2$ are minimal.
\end{corollary}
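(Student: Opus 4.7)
The plan is to reduce the statement directly to Theorem \ref{th-j-integrable-minimal} via Proposition \ref{prop-equiv-JTphi}. The key observation is that the leaves of $\mathcal{G}_1$ and $\mathcal{G}_2$ already satisfy all the hypotheses needed to invoke those results, so essentially no new computation is required.

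First I would recall that the tangent bundle of a leaf of $\mathcal{G}_i$ coincides with $T\mathcal{G}_i = \mathcal{H}_i \oplus \mathcal{V}$, so in particular these leaves are tangent to the Reeb distribution $\mathcal{V} = \mathbb{R}Z_1 \oplus \mathbb{R}Z_2$. Next I would verify that they are $\phi$-invariant submanifolds: since $\phi$ is decomposable one has $\phi(T\mathcal{F}_i) \subset T\mathcal{F}_i$, and combined with $\phi Z_j = 0$ together with $\alpha_k \circ \phi = 0$, this forces $\phi(\mathcal{H}_i) \subset \mathcal{H}_i$; hence $\phi(T\mathcal{G}_i) = \phi(\mathcal{H}_i) \subset \mathcal{H}_i \subset T\mathcal{G}_i$. (This $\phi$-invariance is in fact already recorded in Section \ref{sect:invariant-submanifolds}.)

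Having established $\phi$-invariance plus tangency to $\mathcal{V}$, Proposition \ref{prop-equiv-JTphi} immediately upgrades the leaves to $J$-invariant (and $T$-invariant) submanifolds. Now Theorem \ref{th-j-integrable-minimal}, applied under the hypothesis that $J$ is integrable, says that a $J$-invariant submanifold is minimal if and only if it is tangent to the Reeb distribution. Since our leaves satisfy the latter condition by construction, the minimality follows at once. The same argument works verbatim with $T$ in place of $J$, handling the parenthetical alternative hypothesis.

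I do not anticipate a real obstacle: every ingredient (decomposability yielding $\phi$-invariance of the $\mathcal{G}_i$-leaves, the equivalence of invariance properties under tangency to $\mathcal{V}$, and the minimality criterion in the complex case) has already been proved in the preceding sections. The only minor point worth stating explicitly in the write-up is the brief verification that $\phi(\mathcal{H}_i) \subset \mathcal{H}_i$, so that the reader immediately sees that the leaves of $\mathcal{G}_i$ — and not merely those of $\mathcal{F}_i$ — inherit the $\phi$-invariance needed to feed into Proposition \ref{prop-equiv-JTphi}.
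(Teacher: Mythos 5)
Your proposal is correct and matches the paper's intended argument: the paper states this corollary without a separate proof, relying precisely on the facts that the leaves of $\mathcal{G}_i$ are $\phi$-invariant and tangent to the Reeb distribution, hence $J$- and $T$-invariant by Proposition \ref{prop-equiv-JTphi}, so Theorem \ref{th-j-integrable-minimal} gives minimality. Your explicit check that $\phi(\mathcal{H}_i)\subset\mathcal{H}_i$ is a sound (and welcome) spelling-out of the decomposability step.
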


\begin{proof}[Proof of Theorem \ref{th-j-integrable-minimal}] 

In order to compute the normal mean curvature $H$ of the $J$-invariant submanifold $N$, one needs the expression of the tensor field $F(X,Y)=(\nabla_X J)Y$ where $\nabla$ is the Levi-Civita connection of $g$. Since $J$ is integrable, $g$ is Hermitian with fundamental $2$-form 
$$
\Phi =d\alpha_1+d\alpha_2-2\alpha_1\wedge \alpha_2.
$$
First observe that 
$\alpha_2 \circ J=\alpha_1$ and $d\alpha_i(JX,JY)=d\alpha_i(X,Y)$.
Moreover by the decomposability of $\phi$ we have
$\pi_i \circ J=J\circ \pi_i$ where 
$\pi_i:TM\rightarrow \mathcal{H}_j$ (for $j\neq i$ with $i,j=1,2$) denote the orthogonal projections.
Next, using this and the classical equation for a Hermitian structure
$4g((\nabla_X J)Y,W)=6d\Phi(X,JY,JW)-6d\Phi(X,Y,W)$, after a straightforward calculation we get:
\begin{equation*}
\begin{split}
F(X,Y)&=
[-d\alpha_2(X,Y)-d\alpha_1(X,JY)]Z_1
+[d\alpha_1(X,Y)-d\alpha_2 (X, JY)]Z_2 \\
&+\alpha_2(Y)\pi_1 JX-\alpha_1(Y) \pi_2 JX-\alpha_1(Y) \pi_1 X -\alpha_2 (Y) \pi_2 X.
\end{split}
\end{equation*}
Any vector $v$ tangent to $M$ at a point of $N$ decomposes as $v=v^T+v^\bot$ where $v^T$ and $v^\bot$ are tangent and orthogonal to $N$ respectively. The $J$-invariance implies that $J(v^T)=(Jv)^T$ and $J(v^\bot)=(Jv)^\bot$.
Denote by $\operatorname{B}$ the second fundamental form of the submanifold $N$.
Then
$
\operatorname{B}(X,JY)=J\operatorname{B}(X,Y)+F(X,Y)^\bot
$
and
$
\operatorname{B}(JX,JY)+\operatorname{B}(X,Y)=JF(Y,X)^\bot+F(JX,Y)^\bot
$.
Hence we obtain
\begin{equation*}
\begin{split}
\operatorname{B}(X,X)+\operatorname{B}(JX,JX)&=
-2\lVert \pi_2 X \rVert^2 Z_1^\bot +2\lVert \pi_1 X \rVert^2Z_2^\bot\\
&+2[-\alpha_1(X)\pi_1 JX-\alpha_2(X) \pi_2 JX-\alpha_2(X) \pi_1 X +\alpha_1 (X) \pi_2 X]^\bot\
\end{split}
\end{equation*}
Let $N^\prime$ be the open set of $N$ consisting of all points where $Z_1^T\neq0$. It is also a $J$-invariant submanifold of $M$. Take an orthonormal (local) $J$-basis on $N^\prime$
$$
e_1, J e_1, \ldots , e_n , J e_n.
$$
One can choose it in such a way that 
$e_1=\frac{1}{\lVert Z_1^T\rVert}Z_1^T$
and then
$Je_1=\frac{1}{\lVert Z_1^T\rVert}Z_2^T$.
Since the $e_l$ and $Je_l$, for $l=2, \ldots ,n$, are orthogonal to $Z_1^T$, $Z_2^T$, $Z_1^\bot$ and $Z_2^\bot$, they are orthogonal to $Z_1$ and $Z_2$ too. So they are 
 horizontal. 
Now the normal mean curvature along $N^\prime$ is
$$H_{\vert N^\prime}=\frac{1}{2n}\tr (\operatorname{B})=\frac{1}{2n}\sum_{l=1}^n\left(\operatorname{B}(e_l,e_l)+\operatorname{B}(Je_l,Je_l)\right)$$
and becomes
\begin{equation}\label{eq-meancurvature}
H_{\vert N^\prime}=\frac{1}{n}\left(-\sum_{l=1}^n \lVert\pi_2 e_l\rVert^2 Z_1^\bot +\sum_{l=1}^n \lVert\pi_1 e_l\rVert^2 Z_2 ^\bot +(\pi_2Z_1^T -\pi_1 Z_2^T)^\bot\right).
\end{equation}

If we suppose $N$ minimal, then $H_{\vert N^\prime}=0$ and the scalar products with $Z_i^\bot$ yield
\begin{equation}\label{eq-scalarproductZH}
\begin{split}
0&=ng(H_{\vert N^\prime},Z_1^\bot)= -\lVert\pi_2 Z_1^T\rVert^2   -  \lVert Z_1^\perp\rVert^2  \sum_{l=1}^n \lVert \pi_2 e_l\rVert ^2 \\
0&=ng(H_{\vert N^\prime},Z_2^\bot)= \lVert\pi_1 Z_2^T\rVert^2   + \lVert Z_2^\perp\rVert^2  \sum_{l=1}^n \lVert \pi_1 e_l\rVert ^2.
\end{split}
\end{equation}
Indeed, using \eqref{eq-meancurvature} and the fact that $Z_2 ^\bot=(J Z_1 )^\bot=JZ_1 ^\bot$,
we have 
$$
ng(H_{\vert N^\prime},Z_1^\bot)=  g( (\pi_2Z_1^T -\pi_1 Z_2^T)^\bot , Z_1^\bot )
-  \lVert Z_1^\perp\rVert^2  \sum_{l=1}^n \lVert \pi_2 e_l\rVert ^2 $$
The first term on the right becomes
\begin{equation*}
\begin{split}
g( (\pi_2Z_1^T -\pi_1 Z_2^T)^\bot , Z_1^\bot )&=g( \pi_2Z_1^T -\pi_1 Z_2^T, Z_1-Z_1^T )\\
                                                                        &=-g( \pi_2Z_1^T -\pi_1 Z_2^T, Z_1^T )\\                                                                      
                                                                        &=-\lVert\pi_2 Z_1^T\rVert^2 + g(\pi_1 Z_2^T, \pi_1 Z_1^T )\\
                                                                        &=-\lVert\pi_2 Z_1^T\rVert^2 \, ,
\end{split}
\end{equation*}
where we have used the pairwise orthogonality of the subbundles $\mathcal{H}_1$, $\mathcal{H}_2$, $\mathbb{R} Z_1$, $\mathbb{R} Z_2$, and for the last simplification the fact that $\pi_1 Z_2^T=\pi_1 (J Z_1)^T=J\pi_1 Z_1^T$.
Thus we obtain the first equation of \eqref{eq-scalarproductZH}. The second one comes after an analogous computation.

Now from Equations \eqref{eq-scalarproductZH}, 
we get $\pi_2 Z_1^T=0$ and $\pi_1 Z_1^T=J\pi_1 Z_2^T=0$ along $N^\prime$, which means that $Z_i^T=Z_i$ at these points. Hence $Z_1$ and $Z_2$ are tangent to 
$N^\prime$. Now we have to prove that $N=N^\prime$. Every point $p$ of $N$ is in the closure of $N^\prime$ in $N$. For otherwise, there exists an open  neighborhood $U_p$ of $p$ in $N$ which does not intersect $N^\prime$, i.e. $Z_1$ and $Z_2$ are orthogonal to the $J$-invariant (and then also $\phi$-invariant) submanifold $U_p$. But this contradicts Proposition \ref{z1z2orthogonaux}. Now since $Z_1^T=Z_1$ on $N^\prime$, by continuity of $Z_1^T$ we have $(Z_1^T)_p=(Z_1)_p$ and then $p\in N^\prime$. Hence $N=N^\prime$ so that $Z_1$ and $Z_2=JZ_1$ are tangent to $N$.

Conversely suppose that $Z_1$ and $Z_2$ are tangent to $N$. Then $Z_i^T=(Z_i)_{\vert N}$ which implies that $N=N^\prime$, and replacing in Equation \eqref{eq-meancurvature}  we get  $H=0$. This completes the proof.
 
\end{proof}
\begin{remark}
One could hope on a full generalization of the original Vaisman result, which could be stated as follows: a $J$ and $T$-invariant submanifold of a normal MCP inherits the structure of normal MCP if and only if it is minimal. In fact this kind of generalization is not possible, because the submanifold in Example \ref{exampleh3xh3biscompact} is both $J$ and $T$-invariant, therefore it is minimal, but it does not inherit the normal MCP of the ambient manifold by Proposition \ref{prop:abe-conterexample}.
\end{remark}

\section*{Acknowledgements}
The authors are grateful to Paola Piu and Michel Goze for their helpful suggestions. They also wish to thank the referee for his useful comments that improved the paper.

\end{document}